\newcommand{\R}{\mathbb R}
\newcommand{\C}{\mathbb C}
\newcommand{\N}{\mathbb N}
\newcommand{\W}{\mathbb W}
\newcommand{\lv}{\left|}
\newcommand{\rv}{\right|}
\newcommand{\lV}{\left\|}
\newcommand{\rV}{\right\|}
\newcommand{\lp}{\left(}
\newcommand{\rp}{\right)}
\newcommand{\lc}{\left\{}
\newcommand{\rc}{\right\}}
\newcommand{\inv}{^{-1}}
\newcommand{\Th}{^{\text{th}}}
\newcommand{\ar}{\xrightarrow[]{}}
\newcommand{\bbm}[1]{\begin{bmatrix}#1\end{bmatrix}}
\newcommand{\mc}[1]{\mathcal{#1}}
\newcommand{\cs}[1]{\lc#1\rc}
\newcommand{\vs}[1]{\lv#1\rv}
\newcommand{\ps}[1]{\lp#1\rp}
\newcommand{\bs}[1]{\left[#1\right]}
\newcommand{\Vs}[1]{\lV#1\rV}
\newcommand{\mcdf}{\bbm{M&C\\D&F}}
\newcommand{\rdef}{M+C\ps{\lambda I-F}\inv D}
\newcommand{\li}{\lambda I}
\newcommand{\lifi}{\ps{\lambda I-F}\inv }
\newcommand{\red}{\mc{R}\ps{\lambda,S,A}}
\newcommand{\fixmewo}{\marginnote{FIX THIS LATER}}
\newcommand{\fixmew}[1]{\marginnote{FIX THIS LATER:\newline#1\newline FIX THIS LATER}}
\DeclareDocumentCommand\fixme{ g }{
\IfNoValueF{#1}{\fixmewo}{\fixmew{#1}}
}
\DeclareMathOperator{\tr}{tr}
\DeclareMathOperator{\col}{col}
\DeclareMathOperator{\proj}{proj}
\newtheorem{theorem}{Theorem}[section]%{Edge Splitting}
\newtheorem{corollary}[theorem]{Corollary}%{Existence of Unfoldings}
\newtheorem{lemma}[theorem]{Lemma}
\theoremstyle{definition}
\newtheorem{definition}[theorem]{Definition}
\newtheorem{example}[theorem]{Example}
\newtheorem{remark}[theorem]{Remark}
\title{Isospectral reductions and quantum walks on graphs}
\author{Mark Kempton\thanks{Department of Mathematics, Brigham Young University, Provo UT (\texttt{mkempton@mathematics.byu.edu}).} and John Tolbert\thanks{Department of Mathematics, Wake Forest University, Winston-Salem NC  (\texttt{johntolbert2001@gmail.com}).}}
\date{}
\begin{document}

\maketitle

\begin{abstract}
    We give a new formula for computing the isospectral reduction of a matrix (and graph) down to a submatrix (or subgraph). Using this, we generalize the notion of isospectral reductions.  In addition, we give a procedure for constructing a matrix whose isospectral reduction down to a submatrix is given.  We also prove that the isospectral reduction completely determines the restriction of the quantum walk transition matrix to a subset. Using these, we construct new families of simple graphs exhibiting perfect quantum state transfer.
\end{abstract}

\noindent {\bf Keywords:} Isospectral reduction, equitable partition, quantum walk, perfect state transfer.

\noindent {\bf AMS subject classification:} 05C50, 15A18.

\section{Introduction}\label{sec:intro}

Isospectral reductions are a valuable recent tool for studying spectra of graphs and matrices.  Given a graph $G$ with adjacency matrix $A$, and a subset $S$ of its vertex set, the isospectral reduction of $A$ to $S$ is given by 
\[
\mc{R}(\lambda,S,A) = A_{S\times S}-A_{S\times\overline{S}}\ps{A_{\overline{S}\times\overline{S}}-\li}^{-1}A_{\overline{S}\times S}.
\]
Note that $\mathcal{R}(\lambda,S,A)$ is an $|S|\times |S|$ matrix with entries that are rational functions in the variable $\lambda$.  Isospectral reductions have the remarkable feature of yielding a smaller matrix that preserves the spectral properties of the original.  We give a more detailed development in Section \ref{sec:iso_red}.  See also \cite{MR3237552} for a detailed introduction.  Isospectral reductions have been used in a variety of contexts in graph theory and network science.  In particular, they have been used to improve the eigenvalue approximations of Gershgorin, Brauer, and Brualdi \cite{Gershgorin_1931,Brauer_1947,Brualdi_1982}; study the pseudo-spectra of graphs and matrices \cite{Vasquez_2015}; create stability preserving transformations of networks \cite{Bunimovich_2011,Bunimovich_2013,Reber_2019}; and study the survival probabilities in open dynamical systems \cite{bunimovich_2014}.

Notably, recent work \cite{kempton2020characterizing} has shown that isospectral reductions can be used to characterize when two vertices $u$ and $v$ of graph $G$ are \emph{cospectral}, i.e. when $G\backslash u$ and $G\backslash v$ have the same spectrum.  Cospectral vertices are of interest in understanding what spectral information can reveal about a graph.  In addition, much of the research around cospectral vertices is motivated by applications to quantum information theory.  Specifically, we say that there is \emph{perfect state transfer} from vertex $u$ to $v$ at time $t=\tau$ if the quantum walk transition matrix $e^{-itA}$ satisfies \[e^{-i\tau A}e_u=\gamma e_v\] for some unit complex number $\gamma$, where $e_x$ denotes the indicator vector for vertex $x$.  We will give more detailed preliminaries concerning quantum walks and perfect state transfer in Section \ref{sec:quantum_walk}. See also \cite{godsil_survey,kay2010perfect} for good surveys introducing this area.

It is well known (see \cite{godsil_survey,kay2010perfect}) that a necessary condition for perfect state transfer from $u$ to $v$ is that $u$ and $v$ be cospectral. Some recent work \cite{rontgen2020designing} has already investigated the question of using isospectral reductions to construct graphs with cospectral vertices as in \cite{kempton2020characterizing} and achieve approximate perfect state transfer with appropriate weights.  One goal of the present paper is to build off of the work of \cite{kempton2020characterizing} and establish an even stronger connection between the isospectral reduction and quantum state transfer phenomena.  Indeed, one of our main results (Theorem \ref{Quantum walk equivalence to reduction} below) is that the restriction of the quantum walk transition matrix to a subset of vertices is completely determined by the isospectral reduction to that subset.  In particular, whether and when perfect state transfer occurs between two vertices can be completely determined by the isospectral reduction to those two vertices.  This makes the question raised in \cite{kempton2020characterizing} of how to reverse engineer the isospectral reduction process of significant relevance to the quantum state transfer problem.  The other main contribution of this paper is to address this question and give a procedure for constructing a matrix with entries in $\R$ or $\C$ whose isospectral reduction is given.  We refer to this as \emph{unfolding} the isospectral reduction.

\subsection{Outline}

The remainder of this paper is organized as follows.  In Section \ref{sec:iso_red}, we begin with the basic preliminaries regarding isospectral reductions, and prove several new lemmas and technical results that will be useful later when investigating the unfolding of isospectral reductions.  In Section \ref{sec:new_form}, we give a new expression for isospectral reductions, namely
\begin{equation}\label{eq:new_form}
    \mc{R}(\lambda,S,A) = \lambda I-\left(\Sigma^T(\lambda I -A)^{-1}\Sigma\right)^{-1}
\end{equation} where $\Sigma=[I ~ 0]^T$ (see Theorem \ref{Formula 2 for the reduction} below).  This new formula will provide some necessary insight for several of the results later on.  First, in Section \ref{sec:gen_fun}, we investigate the combinatorial interpretation of isospectral reductions in terms of walk generating functions, and prove a relationship between the walk generating function relative to a subset, and the \emph{non-returning} walk generating function for that subset (Theorem \ref{thm:gen_funct}), generalizing a formula involving single node subsets from \cite{kempton2020characterizing}.  Next in Section \ref{sec:gen_red}, we use the formula from (\ref{eq:new_form}) to define \emph{generalized} isospectral reductions, replacing the matrix $\Sigma$ with any matrix with orthonormal columns, and deduce many of the classic results from the theory of isospectral reductions in the generalized case.  In Section \ref{sec:equit_part}, we show how the theory of equitable partitions of matrices can be viewed as a special case of these generalized isospectral reductions (Theorem \ref{thm:equit}).

In Section \ref{sec:unfolding}, we directly address the question of \emph{unfolding} an isospectral reduction, that is, constructing a matrix with real or complex entries with a given isospectral reduction.  In particular, we give necessary and sufficient conditions for a reduction to have a Hermitian unfolding, and give a procedure for finding such an unfolding (Theorem \ref{Unfolding Existence and Procedure}).

In Section \ref{sec:quantum_walk} we apply these ideas to the theory of quantum walks on graphs.  We first give the necessary preliminaries regarding quantum walks, perfect state transfer, and other tools, and then prove that the isospectral reduction completely determines the quantum walk matrix restricted to a subset (Theorem \ref{Quantum walk equivalence to reduction}).  The proof of this theorem relies heavily on the perspective given from the formula in (\ref{eq:new_form}).

Finally, in Section \ref{sec:examples}, we apply the quantum walk determination theorem of Section \ref{sec:quantum_walk}, along with the unfolding procedure of Section \ref{sec:unfolding}, to give a novel procedure to construct new examples of simple graphs exhibiting perfect state transfer.  The idea here is to start with a graph whose quantum walk behavior is known, isospectrally reduce down to a conveniently chosen subset, then unfold the result to a new graph that will have the same quantum walk behavior as the original.  Specifically, we start with hypercubes, which are known to have perfect state transfer in constant time, and describe how to construct a new family of graphs (with considerably less symmetry) with perfect state transfer at the same time.

\section{Isospectral Reductions}\label{sec:iso_red}
The isospectral reduction of a graph (square matrix) to a subset of its vertex set (to a principal sub-matrix) is a graph (matrix)-valued function that captures some of the original graph's (matrix's) spectral properties in a smaller graph (matrix).  We use notation from \cite{MR3237552,kempton2020characterizing}: let $\W$ be the set of rational function $p(\lambda)/q(\lambda)$ with complex coefficients in the variable $\lambda$ with no common factors and $deg(p)\leq deg(q)$.  Let $\W^{n\times n}$ be the set of $n\times n$ matrices whose entries come from $\W$.
\begin{definition}
 For a matrix $A=\mcdf$ the \emph{isospectral reduction} to the $M$ block (indexed by the set $S$) is given by \[\red=\rdef\in\W^{|S|\times|S|}.\]  
\end{definition}

Throughout this paper, $G$ will be a (possibly directed and/or weighted) graph (possibly with loops) on the vertex set $V$, $S$ a nonempty subset of $V$, and the adjacency matrix of $G$ will be $A=\bbm{M&C\\D&F}$, where the rows and columns of the $M$ block of $A$ correspond to the vertices in $S$. The isospectral reduction of a graph $G$ can be viewed as the weighted graph whose (weighted) adjacency matrix is the matrix isospectral reduction of the adjacency matrix of $G$.  As such, we will not hereafter distinguish between graph and matrix reductions, or indeed between graphs and matrices: $G$ will always be a graph, and $A$ the adjacency matrix of that graph.  See \cite{MR3237552,kempton2020characterizing} for detailed development of the theory of both graph and matrix isospectral reductions.  The conventional notation for the isospectral reduction $\rdef$ of $A$ to $S$ is $\mc{R}_S\ps{A}$, but in order to disambiguate which variable plays the role of $\lambda$ we will instead use the notation $\red$.%, with vertical bars separating the scalar, vector-like\footnote{By the equivalence between subsets of $V$ and their indicator vectors, which indeed lie in the ``vertex" space on which $A$ acts.}, and matrix variables in order of increasing degree
 The use of the formula $\rdef$ rather than the more conventional $A_{S\times S}-A_{S\times\overline{S}}\ps{A_{\overline{S}\times\overline{S}}-\li}^{-1}A_{\overline{S}\times S}$ is meant to improve readability and clarify the roles of each of the submatrices in the isospectral reduction. %the connections between the isospectral reduction and walk-generating functions, as will be discussed near the end of the paper.\footnote{Memorizing these conventions immediately should not be necessary for understanding this paper, since we will often reiterate or make reference to them as they come up.}

A remarkable fact that makes isospectral reductions useful and interesting is that they preserve the graph's eigenvalues in the following sense. 
\begin{theorem}[Theorem 2.2 of \cite{MR3237552}]\label{Eigenvalue preservation}
$A=\mcdf$ as above, 
\[\det\ps{\lambda I-\red}=\frac{\det\ps{\lambda I-A}}{\det\ps{\lambda I-F}}.\]
\end{theorem}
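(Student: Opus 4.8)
The plan is to reduce the determinant identity to the Schur complement formula for block matrices. Writing $\lambda I - A = \bbm{\li-M & -C \\ -D & \li-F}$ and observing that $\red = M + C\lifi D$, I note that $\li - \red = \ps{\li-M} - C\lifi D$ is precisely the Schur complement of the block $\lif$ inside $\li - A$. This is the key structural observation: the whole theorem is an instance of the classical fact that the determinant of a block matrix factors through a Schur complement.

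The main computation is to invoke the Schur determinant formula. Assuming $\lif$ is invertible (equivalently $\det\ps{\lif}\neq 0$, which holds for all but finitely many $\lambda$ since it is a polynomial), I would perform the block $LDU$-type factorization
\[
\lambda I - A = \bbm{I & -C\lifi \\ 0 & I}\bbm{\ps{\li-M}-C\lifi D & 0 \\ -D & \lif},
\]
where the first factor is block upper-triangular with unit diagonal blocks and hence determinant $1$. Taking determinants of both sides and using multiplicativity, the determinant of $\li-A$ equals $\det\ps{\ps{\li-M}-C\lifi D}\cdot\det\ps{\lif}$. Recognizing the first factor on the right as $\det\ps{\li-\red}$ by the identification above, I obtain
\[
\det\ps{\li-A}=\det\ps{\li-\red}\det\ps{\lif},
\]
and dividing by $\det\ps{\lif}$ gives the claimed formula.

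I expect the main obstacle to be a matter of rigor rather than difficulty: the expression $\red$ has rational-function entries, so the identity $\det\ps{\li-\red}=\det\ps{\li-A}/\det\ps{\lif}$ is an identity of rational functions in $\lambda$. The block factorization above is only valid where $\lif$ is invertible, i.e. away from the finite set of roots of $\det\ps{\lif}$. To conclude the identity everywhere I would argue that both sides are rational functions agreeing on the cofinite set where $\lif$ is invertible, and two rational functions agreeing on infinitely many points are equal; thus the identity holds as an equality in $\W$ (or in the field of rational functions). This analytic-continuation / polynomial-identity step is the only subtlety, and I would state it explicitly to justify passing from the generic computation to the full rational-function identity.
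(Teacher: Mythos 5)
Your proof is correct. The paper states this result as Theorem 2.2 of the cited reference and gives no proof of its own, so there is nothing to compare against; but your argument is the standard one. The block factorization you write down is easily verified to reproduce $\lambda I - A$, the first factor has determinant $1$, the second is block lower triangular, and the identification of $\ps{\li-M}-C\lifi D$ with $\li-\red$ is exactly right. Your closing remark about extending the identity from the cofinite set where $\lif$ is invertible to an identity of rational functions in $\W$ is the correct way to handle the only subtlety. Nothing is missing.
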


Isospectral reductions also satisfy the nice property that they can be applied in sequence to successively smaller subsets, and the result is independent of the order in which things are done, as seen in the following result.
\begin{theorem}[Theorem 2.5 of \cite{MR3237552}]\label{Reduction commutativity}
For any $S'\subseteq S$, $\mc{R}\ps{\lambda,S',\red}=\mc{R}\ps{\lambda,S',A}$.
\end{theorem}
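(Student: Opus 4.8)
The plan is to recast the isospectral reduction as the inverse of a principal block of the resolvent $\ps{\li-A}\inv$; once that is in hand, the commutativity reduces to the trivial fact that principal sub-blocks nest. The starting observation is the Schur-complement identity
\[
\li-\red = \ps{\li-M}-C\lifi D,
\]
which exhibits $\li-\red$ as the Schur complement of the $\overline S$-block $\lif$ in the matrix $\li-A=\bbm{\li-M & -C\\ -D & \lif}$. Feeding this into the standard block-inversion (Banachiewicz) formula gives the key identity
\[
\ps{\li-\red}\inv = \bs{\ps{\li-A}\inv}_{S\times S},
\]
so that the reduction to $S$, shifted and inverted, is exactly the $S\times S$ block of the resolvent. (This is the content of the formula (\ref{eq:new_form}) proved later in Section \ref{sec:new_form}, but it also follows directly from block inversion, so I would establish it here from scratch to avoid any circularity.)

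With this identity the proof is a short chain of equalities. Applying it to the $S\times S$ matrix $\red$ with the subset $S'\subseteq S$, then to $A$ with $S$, and finally to $A$ with $S'$,
\begin{align*}
\ps{\li-\mc{R}\ps{\lambda,S',\red}}\inv
&= \bs{\ps{\li-\red}\inv}_{S'\times S'}
= \bs{\bs{\ps{\li-A}\inv}_{S\times S}}_{S'\times S'}\\
&= \bs{\ps{\li-A}\inv}_{S'\times S'}
= \ps{\li-\mc{R}\ps{\lambda,S',A}}\inv .
\end{align*}
The middle equality is the nesting step: since $S'\subseteq S$, the $S'\times S'$ block of the $S\times S$ block of $\ps{\li-A}\inv$ is simply its $S'\times S'$ block. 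Inverting the two ends and cancelling $\li$ yields $\mc{R}\ps{\lambda,S',\red}=\mc{R}\ps{\lambda,S',A}$.

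The genuine work, and the step I expect to be the main obstacle, is establishing the block-inversion identity with all inverses legitimate. Everything should be read over the field $\C(\lambda)$ of rational functions, where $\li-A$, $\lif$, and $\li-\red$ are invertible because their determinants are nonzero: for $\li-\red$ this is Theorem \ref{Eigenvalue preservation}, giving $\det\ps{\li-\red}=\det\ps{\li-A}/\det\ps{\li-F}\neq 0$. One must also confirm that each sub-block whose Schur complement is implicitly taken is invertible over $\C(\lambda)$, which again follows from non-vanishing of the relevant determinants. An equivalent route bypasses the resolvent and applies the Crabtree--Haynsworth quotient formula directly to $\li-A$, partitioned along the three index sets $S'$, $S\setminus S'$, and $\overline S$: taking the Schur complement of $\overline S$ and then of $S\setminus S'$ agrees with taking the Schur complement of the whole block $\overline{S'}=\ps{S\setminus S'}\cup\overline S$ in one step. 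Either way, the only real care needed is the bookkeeping of the three-way partition and checking that the intermediate quotient matches the $\ps{S\setminus S'}$-block of $\li-\red$.
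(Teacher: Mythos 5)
Your proof is correct, but note that the paper offers no proof of this statement to compare against: it is imported verbatim as Theorem 2.5 of \cite{MR3237552}, where it is established by the Schur-complement quotient property you mention in your closing remarks. What you have actually done is re-derive it from the resolvent identity $\ps{\li-\red}\inv=\bs{\ps{\li-A}\inv}_{S\times S}$, which is the paper's formula (\ref{eq:new_form}) / Theorem \ref{Formula 2 for the reduction} --- but the paper proves \emph{that} identity by invoking Theorem \ref{Reduction commutativity} (computing the reduction of a bordered matrix in two orders), so your logical direction is the reverse of the paper's. You correctly spot this circularity and discharge it by proving the identity from scratch via block inversion of $\li-A$ over $\C\ps{\lambda}$, which is legitimate: $\lif$ is invertible over $\C\ps{\lambda}$ since $\det\ps{\lif}$ is a nonzero polynomial, and $\li$ minus the $\ps{S\setminus S'}\times\ps{S\setminus S'}$ block of $\red$ is likewise invertible because the entries of $\red-M$ vanish as $\lambda\to\infty$ (Theorem \ref{Constant part of the reduction is M}), so the relevant determinant is asymptotic to a power of $\lambda$ and hence a nonzero element of $\C\ps{\lambda}$. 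With those checks in place the nesting of principal blocks of the resolvent finishes the argument. The one step worth making fully explicit is the first equality in your chain, which applies the Banachiewicz formula to the $\W$-matrix $\red$ rather than to a constant matrix; this is fine because it is a field identity and everything lives in $\C\ps{\lambda}$, but it is precisely where the invertibility of the intermediate block must be cited. Your alternative route via the Crabtree--Haynsworth quotient formula applied to the three-way partition $S'$, $S\setminus S'$, $\overline S$ is essentially the original proof in \cite{MR3237552}.
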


Isospectral reductions can also capture some information about eigenvectors of a matrix \cite{duarte2015eigenvectors}.  Indeed, under some mild conditions on the set $S$, we have that the restriction of an eigenvector to $S$ remains an eigenvector after plugging in the associated eigenvalue into the isospectral reduction.

\begin{theorem}[Theorem 1 of \cite{duarte2015eigenvectors}]\label{thm:eigvect}
Let $Au=\lambda_0 u$.  Then plugging $\lambda_0$ into $\mathcal{R}(\lambda,S,A)$ satisfies
\[
\mathcal{R}(\lambda_0,S,A)u_S=\lambda_0u_S
\]
where $u_S$ denotes the restriction of $u$ to the set $S$.
\end{theorem}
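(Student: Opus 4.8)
The plan is to expand the eigenvector equation $Au=\lambda_0 u$ into its two block rows and then eliminate the coordinates of $u$ lying outside $S$. Writing $A=\mcdf$ with the rows and columns of $M$ indexed by $S$, and partitioning the eigenvector conformally as $u=\bbm{u_S\\u_{\overline{S}}}$, the single equation $Au=\lambda_0 u$ is equivalent to the system
\[
Mu_S+Cu_{\overline{S}}=\lambda_0 u_S,\qquad Du_S+Fu_{\overline{S}}=\lambda_0 u_{\overline{S}}.
\]

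Next I would solve the second equation for $u_{\overline{S}}$. Rearranging gives $\ps{\lambda_0 I-F}u_{\overline{S}}=Du_S$, and assuming $\lambda_0 I-F$ is invertible we obtain $u_{\overline{S}}=\ps{\lambda_0 I-F}\inv Du_S$. Substituting this into the first equation yields
\[
\ps{M+C\ps{\lambda_0 I-F}\inv D}u_S=\lambda_0 u_S,
\]
and the matrix on the left is exactly $\mc{R}\ps{\lambda_0,S,A}$ by the definition of the isospectral reduction. This is the desired identity $\mc{R}\ps{\lambda_0,S,A}u_S=\lambda_0 u_S$, so the computation itself is short and essentially mechanical once the system is set up.

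The step I expect to be the real crux — and the source of the ``mild conditions'' referenced before the statement — is the invertibility of $\lambda_0 I-F$. The reduction $\red$ has entries that are rational functions of $\lambda$ whose poles lie among the eigenvalues of $F$; evaluating at $\lambda=\lambda_0$ is legitimate only when $\lambda_0\notin\sigma(F)$, which is precisely what makes $\ps{\lambda_0 I-F}\inv$ exist and lets the second block equation be inverted. I would therefore state this hypothesis explicitly (namely that $\lambda_0$ is not an eigenvalue of the submatrix $F$ indexed by $\overline{S}$), observe that it holds for all but finitely many eigenvalues, and remark that when $\lambda_0\in\sigma(F)$ the evaluated reduction need not be defined, so the claim must either be excluded there or interpreted through a limiting argument.
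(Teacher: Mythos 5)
Your argument is correct and is the standard proof of this result (the paper itself gives no proof, simply citing Theorem 1 of the referenced work, whose argument is exactly this block elimination). You have also correctly identified the ``mild conditions'' alluded to in the surrounding text: the substitution requires $\lambda_0\notin\sigma(F)$ so that $\ps{\lambda_0 I-F}\inv$ exists and the evaluation of $\red$ at $\lambda=\lambda_0$ is defined, a hypothesis the theorem statement leaves implicit.
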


\noindent Furthermore, the isospectral reduction $\red$ captures many walk counting properties of $A$ as they apply to $S$, and for instance is sufficient to tell whether two vertices in $S$ are cospectral in $G$.
\begin{theorem}[Theorem 3.3 and Corollary 3.4 of \cite{kempton2020characterizing}]
Two vertices $u$ and $v$ of a graph $G$ are cospectral if and only if $\mc{R}(\lambda,\{u\},A)=\mc{R}(\lambda,\{v\},A)$.  Equivalently, the $2\times2$ reduction $\mc{R}(\lambda,\{u,v\},A)$ has an automorphism.
\end{theorem}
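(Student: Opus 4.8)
The plan is to reduce the entire statement to a single closed form for the one-vertex reduction $\mc{R}\ps{\lambda,\cs{u},A}$, obtained directly from the eigenvalue–preservation identity (Theorem \ref{Eigenvalue preservation}). Taking $S=\cs{u}$, the reduction is a scalar ($1\times1$ matrix), so $\det\ps{\li-\mc{R}\ps{\lambda,\cs{u},A}}=\lambda-\mc{R}\ps{\lambda,\cs{u},A}$, while the block $F=A_{\overline u\times\overline u}$ is exactly the adjacency matrix of $G\backslash u$. Theorem \ref{Eigenvalue preservation} then yields
\[
\lambda-\mc{R}\ps{\lambda,\cs{u},A}=\frac{\det\ps{\li-A}}{\det\ps{\li-A_{\overline u\times\overline u}}},\qquad\text{i.e.}\qquad \mc{R}\ps{\lambda,\cs{u},A}=\lambda-\frac{\det\ps{\li-A}}{\det\ps{\li-A_{\overline u\times\overline u}}}.
\]
Since the denominator is the characteristic polynomial of $G\backslash u$, this converts the first equivalence into a comparison of characteristic polynomials.

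Indeed, the first equivalence is then immediate: $\mc{R}\ps{\lambda,\cs{u},A}=\mc{R}\ps{\lambda,\cs{v},A}$ holds iff $\det\ps{\li-A}/\det\ps{\li-A_{\overline u\times\overline u}}=\det\ps{\li-A}/\det\ps{\li-A_{\overline v\times\overline v}}$. Because $\det\ps{\li-A}$ is a nonzero polynomial and $\C[\lambda]$ is an integral domain, cross-multiplying and cancelling shows this is equivalent to $\det\ps{\li-A_{\overline u\times\overline u}}=\det\ps{\li-A_{\overline v\times\overline v}}$, that is, to $G\backslash u$ and $G\backslash v$ having the same characteristic polynomial — precisely the definition of $u$ and $v$ being cospectral. (This direction needs no symmetry of $A$.)

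For the second equivalence I would invoke the sequential-reduction theorem (Theorem \ref{Reduction commutativity}). Write $R=\mc{R}\ps{\lambda,\cs{u,v},A}=\bbm{a&b\\c&d}$; when $A$ is symmetric the reduction is symmetric, so $b=c$, and the only nontrivial automorphism of this two-vertex weighted graph is the swap $u\leftrightarrow v$, which holds exactly when $a=d$. Reducing $R$ one more step to $\cs{u}$ and to $\cs{v}$ and applying Theorem \ref{Reduction commutativity} gives $\mc{R}\ps{\lambda,\cs{u},A}=a+bc/\ps{\lambda-d}$ and $\mc{R}\ps{\lambda,\cs{v},A}=d+bc/\ps{\lambda-a}$. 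If $a=d$ these coincide, giving one direction; conversely, equating them and simplifying yields $\ps{a-d}\bigl(1-bc/\ps{\ps{\lambda-a}\ps{\lambda-d}}\bigr)=0$, so either $a=d$ or $bc=\ps{\lambda-a}\ps{\lambda-d}$.

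The main obstacle, and the only delicate point, is excluding the spurious branch $bc=\ps{\lambda-a}\ps{\lambda-d}$. I would rule it out by growth at infinity: every entry of $R$ lies in $\W$ and in fact converges to the corresponding entry of $A$ as $\lambda\to\infty$ (since $\ps{\li-A_{\overline{\cs{u,v}}}}^{-1}\to 0$), so $a,b,c,d$ are all bounded near infinity while $\ps{\lambda-a}\ps{\lambda-d}$ grows like $\lambda^2$; hence the spurious equation is impossible and we must have $a=d$. Alternatively — and more slickly — one can avoid the functional equation entirely via the resolvent form (\ref{eq:new_form}): from $\ps{\li-R}^{-1}=\Sigma^T\ps{\li-A}^{-1}\Sigma$ one reads off that the diagonal entries of $R$ are equal iff $[\ps{\li-A}^{-1}]_{uu}=[\ps{\li-A}^{-1}]_{vv}$, and a one-vertex Schur complement identifies each of these with $\det\ps{\li-A_{\overline u\times\overline u}}/\det\ps{\li-A}$, closing the loop back to cospectrality. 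Either route finishes the argument; the growth argument is self-contained, whereas the resolvent route is cleaner but leans on the later formula (\ref{eq:new_form}).
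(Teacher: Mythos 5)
Your proof is correct. Note that the paper itself gives no proof of this statement --- it is imported verbatim as Theorem 3.3 and Corollary 3.4 of the cited reference --- so there is no in-paper argument to compare against; your route (the closed form $\mathcal{R}(\lambda,\{u\},A)=\lambda-\det(\lambda I-A)/\det\big(\lambda I-A_{\overline{u}\times\overline{u}}\big)$ extracted from Theorem \ref{Eigenvalue preservation}, followed by sequential reduction via Theorem \ref{Reduction commutativity} for the $2\times 2$ statement) is essentially the standard one used in that reference. The only delicate point, discarding the spurious branch $bc=(\lambda-a)(\lambda-d)$, is handled correctly: all entries of the reduction lie in $\mathbb{W}$ and hence are bounded as $\lambda\to\infty$ (Theorem \ref{Constant part of the reduction is M}), while the right-hand side grows like $\lambda^2$, and the cancellation argument is valid because everything lives in the field $\mathbb{C}(\lambda)$.
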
 
%It can even detect whether vertices in $S$ are parallel, which is a condition on $A$'s eigenvectors. Some of these connections are examined more thoroughly in an upcoming paper, but for the time being we will simply accept them as motivations for studying the isospectral reduction. In particular, finding an ``unfolding" procedure- that is, one-sided inverse for the isospectral reduction- would let us construct graphs with interesting spectral properties by ``unfolding" isospectral reductions that exhibit the desired properties.

% \begin{lemma}\label{Folding with zeros}
% Let $R=\rdef$ be the isospectral reduction of $A=\mcdf$ to its first block. Then the reduction of $\bbm{0&\cdots&0&\cdots\\\vdots&M&\vdots&C\\0&\cdots&0&\cdots\\\vdots&D&\vdots&F}$ to its first three blocks (where the labelled zero matrices are not necessarily symmetric) is $\bbm{0&\cdots&0\\\vdots&R&\vdots\\0&\cdots&0}$. 
% \end{lemma}
% \begin{proof}
% We calculate the reduction. Let $S$ index the first three blocks. We have the following:
% \begin{align*}
% \mc{R}\ps{\lambda,S,\bbm{0&\cdots&0&\cdots\\\vdots&M&\vdots&C\\0&\cdots&0&\cdots\\\vdots&D&\vdots&F}}=&\bbm{0&\cdots&0\\\vdots&M&\vdots\\0&\cdots&0}+\bbm{0\\C\\0}\ps{\li-F}\inv\bbm{0&D&0}\\
% =&\bbm{0&\cdots&0\\\vdots&M&\vdots\\0&\cdots&0}+\bbm{0&\cdots&0\\\vdots&C\ps{\li-F}\inv D&\vdots\\0&\cdots&0}\\
% =&\bbm{0&\cdots&0\\\vdots&M+C\ps{\li-F}\inv D&\vdots\\0&\cdots&0}\\
% =&\bbm{0&\cdots&0\\\vdots&R&\vdots\\0&\cdots&0}
% \end{align*}
% \end{proof}

% \begin{theorem}\label{Reduction commutativity}
% \end{theorem}

Much of the motivation for this paper is to find an ``unfolding" procedure that would let us construct graphs with interesting spectral properties by ``unfolding" isospectral reductions that exhibit the desired properties.  With this is mind, the remainder of this section will be dedicated to proving several preliminary technical results that will be useful later on.

\begin{lemma}\label{Edge pre-pre-splitting}
Let $A=\bbm{M_1+M_2&C_1&C_2\\D_1&F_1&0\\D_2&0&F_2}$ with the first block indexed by the set $S$. Then \[\red=\mc{R}\ps{\lambda,S,\bbm{M_1&C_1\\D_1&F_1}}+\mc{R}\ps{\lambda,S,\bbm{M_2&C_2\\D_2&F_2}}.\] 
\end{lemma}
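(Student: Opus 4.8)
The plan is to prove the identity by a direct block computation from the definition $\red=\rdef$, leaning on a single structural observation: because the two tails $F_1$ and $F_2$ do not interact (the off-diagonal $\overline{S}\times\overline{S}$ blocks of $A$ are zero), the resolvent of the $F$ block splits as a direct sum. So the whole argument amounts to unfolding both sides and matching terms.

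First I would match $A=\bbm{M_1+M_2&C_1&C_2\\D_1&F_1&0\\D_2&0&F_2}$ against the generic form $\mcdf$ under the partition of the vertex set into $S$ (the first block) and $\overline{S}$ (the remaining two blocks together). This identifies $M=M_1+M_2$, $C=\bbm{C_1&C_2}$, $D=\bbm{D_1\\D_2}$, and the crucial block-diagonal tail $F=\bbm{F_1&0\\0&F_2}$. The key step is then to invert $\lif$: since $F$ is block diagonal, so is $\lif=\bbm{\lif_1&0\\0&\lif_2}$ (writing $\lif_j=\lambda I-F_j$), and hence so is its inverse, $\lifi=\bbm{\ps{\lif_1}\inv&0\\0&\ps{\lif_2}\inv}$, an identity valid over the field of rational functions in $\lambda$. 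Substituting this into $\red=M+C\lifi D$ and carrying out the block multiplication, the cross terms drop out and I obtain
\[
\red=(M_1+M_2)+C_1\ps{\lif_1}\inv D_1+C_2\ps{\lif_2}\inv D_2.
\]
Finally I would regroup the right-hand side as $\ps{M_1+C_1\ps{\lif_1}\inv D_1}+\ps{M_2+C_2\ps{\lif_2}\inv D_2}$ and recognize each summand, by the definition of the reduction, as $\mc{R}\ps{\lambda,S,\bbm{M_1&C_1\\D_1&F_1}}$ and $\mc{R}\ps{\lambda,S,\bbm{M_2&C_2\\D_2&F_2}}$ respectively, which is exactly the claimed sum.

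As for obstacles: there is no real one here — this is a routine verification, and the statement is almost entirely a bookkeeping consequence of the zero blocks in $A$. The only point that deserves a moment's care is the justification that the block-diagonal structure of $F$ transfers to $\lifi$; this is immediate from the formula for the inverse of a block-diagonal matrix, and one should note it holds as an identity of matrices with entries in $\W$, so that all the subsequent manipulations are legitimate wherever $\det\ps{\lif_1}$ and $\det\ps{\lif_2}$ are nonzero, i.e. generically in $\lambda$.
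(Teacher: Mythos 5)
Your proposal is correct and follows essentially the same route as the paper: both identify $F=\bbm{F_1&0\\0&F_2}$, use the block-diagonal structure to split $\lifi$ into $\ps{\li-F_1}\inv$ and $\ps{\li-F_2}\inv$, expand $C\lifi D$, and regroup. The only difference is cosmetic — you make explicit the (correct) remark that the inversion is an identity over $\W$ — so there is nothing to add.
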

\begin{proof}
% Once again, w
We calculate the reduction:
\begin{align*}
\mc{R}\ps{\lambda,S,\bbm{M_1+M_2&C_1&C_2\\D_1&F_1&0\\D_2&0&F_2}}=&\ps{M_1+M_2}+\bbm{C_1&C_2}\ps{\li-\bbm{F_1&0\\0&F_2}}\inv\bbm{D_1\\D_2}\\
% =&\ps{M_1+M_2}+\bbm{C_1&C_2}\ps{\bbm{\li-F_1&0\\0&\li-F_2}}\inv\bbm{D_1\\D_2}\\
=&\ps{M_1+M_2}+\bbm{C_1&C_2}\bbm{\ps{\li-F_1}\inv&0\\0&\ps{\li-F_2}\inv}\bbm{D_1\\D_2}\\
% =&\ps{M_1+M_2}+\bbm{C_1&C_2}\bbm{\ps{\li-F_1}\inv D_1\\\ps{\li-F_2}\inv D_2}\\
=&M_1+M_2+C_1\ps{\li-F_1}\inv D_1+C_2\ps{\li-F_2}\inv D_2\\
% =&M_1+C_1\ps{\li-F_1}\inv D_1+M_2+C_2\ps{\li-F_2}\inv D_2\\
=&\ps{M_1+C_1\ps{\li-F_1}\inv D_1}+\ps{M_2+C_2\ps{\li-F_2}\inv D_2}.%\\
%=&R_1+R_2
\end{align*}
\end{proof}

\begin{corollary}\label{Edge pre-splitting}
Let $A=\bbm{M_1+M_2+\ldots+M_k&C_1&C_2&\cdots&C_k\\D_1&F_1&0&\cdots&0\\D_2&0&F_2&\cdots&0\\\vdots&\vdots&\vdots&\ddots&\vdots\\D_k&0&0&\cdots&F_k}$ be a matrix with finitely many blocks and define $R_i=\mc{R}\ps{\lambda,S,\bbm{M_i&C_i\\d_i&F_i}}$. Then $\red=R_1+R_2+\ldots+R_k$.
% \begin{proof}
% Parse $A$ as $\bbm{M_1+\ps{M_2+M_3+\ldots}&C_1&\bbm{C_2&C_3&\cdots}\\D_1&F_1&0\\\bbm{D_2\\D_3\\\vdots}&0&\bbm{F_2&0&\cdots\\0&F_3&\cdots\\\vdots&\vdots&\ddots}}$. Then lemma \ref{Edge pre-pre-splitting} gives us that \begin{align*}\red=&\mc{R}\ps{\lambda,S,\bbm{M_1&C_1\\D_1&F_1}}+\mc{R}\ps{\lambda,S,\bbm{M_2+M_3+\ldots&C_2&C_3&\cdots\\D_2&F_2&0&\cdots\\D_3&0&F_3&\cdots\\\vdots&\vdots&\vdots&\ddots}}\\=&R_1+\mc{R}\ps{\lambda,S,\bbm{M_2+M_3+\ldots&C_2&C_3&\cdots\\D_2&F_2&0&\cdots\\D_3&0&F_3&\cdots\\\vdots&\vdots&\vdots&\ddots}}\end{align*}. Since the matrix inside that last reduction has the same form as $A$ but one less block per edge, the corollary may be proved by induction.
% \end{proof}
\end{corollary}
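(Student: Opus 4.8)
The plan is to prove Corollary~\ref{Edge pre-splitting} by induction on the number of blocks $k$, using Lemma~\ref{Edge pre-pre-splitting} as the engine for the inductive step. The base case $k=1$ is immediate, and $k=2$ is precisely the statement of Lemma~\ref{Edge pre-pre-splitting}. So the real content is reorganizing the $k$-block matrix so that Lemma~\ref{Edge pre-pre-splitting} applies with a two-way split into ``first $k-1$ blocks'' versus ``the $k$-th block.''

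First I would set up the inductive step. Assume the result holds for any matrix with $k-1$ off-diagonal blocks. Given the $k$-block matrix $A$, I would group the blocks $F_1,\ldots,F_{k-1}$ together and isolate $F_k$. That is, I would write $A$ in the form $\bbm{M'+M_k & C' & C_k\\ D' & F' & 0\\ D_k & 0 & F_k}$, where $M'=M_1+\cdots+M_{k-1}$, the row block $C'=\bbm{C_1&\cdots&C_{k-1}}$, the column block $D'=\bbm{D_1;\ldots;D_{k-1}}$ (stacked vertically), and $F'=\operatorname{diag}(F_1,\ldots,F_{k-1})$ is block diagonal. Crucially, because all the original cross-blocks between distinct $F_i$'s are zero, the blocks linking the grouped part to the $F_k$ part are genuinely zero, so this is exactly the hypothesis format of Lemma~\ref{Edge pre-pre-splitting} with $M_1\rightsquigarrow M'$, $M_2\rightsquigarrow M_k$, etc.

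Applying Lemma~\ref{Edge pre-pre-splitting} to this regrouping yields
\[
\red=\mc{R}\ps{\lambda,S,\bbm{M'&C'\\D'&F'}}+\mc{R}\ps{\lambda,S,\bbm{M_k&C_k\\D_k&F_k}}.
\]
The second summand is exactly $R_k$. The first summand is the reduction of a matrix that is itself in the $(k-1)$-block form of the corollary (with block-diagonal $F'$ and the corresponding grouped $C',D'$), so the induction hypothesis gives $\mc{R}\ps{\lambda,S,\bbm{M'&C'\\D'&F'}}=R_1+\cdots+R_{k-1}$. Combining, $\red=R_1+\cdots+R_{k-1}+R_k$, completing the induction.

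The main obstacle, such as it is, is bookkeeping rather than mathematics: I need to verify that the regrouped matrix really satisfies the zero-cross-block hypothesis of Lemma~\ref{Edge pre-pre-splitting} and that the grouped version is a bona fide instance of the corollary's own hypothesis (so that the induction hypothesis applies to it). Both hold because the off-diagonal blocks among the $F_i$ are zero by assumption, so merging any subset of them into a single block-diagonal matrix preserves the pattern of zero cross-blocks. I should also note the minor typo in the statement (the lowercase $d_i$ should be $D_i$) but it does not affect the argument. This is the clean, routine way to lift a two-term additivity lemma to a finite sum, and I expect no genuine difficulty beyond stating the regrouping carefully.
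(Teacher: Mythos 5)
Your proof is correct and is essentially the argument the paper intends: the corollary is stated without proof precisely because it follows from Lemma~\ref{Edge pre-pre-splitting} by the routine induction you describe, regrouping the first $k-1$ diagonal blocks into a single block-diagonal $F'$ and splitting off the last block. The bookkeeping you flag (zero cross-blocks being preserved under grouping) is exactly the only thing to check, and it holds.
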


\begin{theorem}\label{Correctness-preserving similarities}
If $X=\bbm{I&0\\0&Q}$ is partitioned conformal to $A=\mcdf$ with $Q$ invertible, then %the reductions of $A$ and $\bbm{I&0\\0&Q}\inv A\bbm{I&0\\0&Q}$ to their $M$ blocks are equal
\[\mc{R}\ps{\lambda,S,XAX\inv}=\red.\]
\end{theorem}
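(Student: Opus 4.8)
The plan is to compute $XAX\inv$ block by block, read off its four blocks, apply the definition of the reduction directly, and then exploit the fact that conjugation by the invertible block $Q$ commutes with forming the resolvent. Since the $S$-block stays fixed while the complementary data gets conjugated by $Q$, I expect the $Q$-factors to cancel in pairs and return exactly $\red$.

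First I would record that $X\inv=\bbm{I&0\\0&Q\inv}$ and multiply out the conjugation in two steps, obtaining
\[
XAX\inv=\bbm{I&0\\0&Q}\mcdf\bbm{I&0\\0&Q\inv}=\bbm{M&CQ\inv\\QD&QFQ\inv}.
\]
Thus the $S$-block is unchanged, while the off-diagonal and complementary blocks acquire $Q$-factors: writing the transformed matrix as $\bbm{M'&C'\\D'&F'}$, we have $M'=M$, $C'=CQ\inv$, $D'=QD$, and $F'=QFQ\inv$. Applying the definition of the reduction to $XAX\inv$ then gives
\[
\mc{R}\ps{\lambda,S,XAX\inv}=M+CQ\inv\ps{\li-QFQ\inv}\inv QD.
\]

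The crux of the argument is the identity $\li-QFQ\inv=Q\ps{\lif}Q\inv$, which holds because $\li$ commutes with $Q$; inverting gives $\ps{\li-QFQ\inv}\inv=Q\lifi Q\inv$. Substituting this into the expression above, the inner $Q\inv$ and the adjacent $C$-side $Q\inv$ telescope against the conjugating $Q$'s, so $CQ\inv\cdot Q\lifi Q\inv\cdot QD$ collapses to $C\lifi D$, leaving $M+C\lifi D=\red$. This cancellation is really the whole content of the theorem.

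There is no deep obstacle here; the work is essentially bookkeeping with block products, so the ``hard part'' is only being careful with the order of the $Q$'s and the placement of the inverse. The one point I would flag for rigor is well-definedness as rational matrix functions: $\lif$ and $\li-QFQ\inv$ are similar, hence invertible for exactly the same values of $\lambda$ (namely all $\lambda$ outside the spectrum of $F$), so both reductions lie in $\W^{|S|\times|S|}$ and are defined on the same domain, and the computed equality therefore holds as an identity of rational matrices rather than merely pointwise.
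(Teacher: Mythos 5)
Your proof is correct and follows essentially the same route as the paper's: compute the conjugated matrix block by block, apply the definition of the reduction, and cancel the $Q$-factors via $\ps{\li-QFQ\inv}\inv=Q\lifi Q\inv$. The only (immaterial) difference is that you literally expand $XAX\inv$ while the paper expands $X\inv AX$, which amounts to replacing $Q$ by $Q\inv$; your closing remark on the common domain of definition is a nice touch not present in the paper.
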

\begin{proof}
First we calculate $\bbm{I&0\\0&Q}\inv A\bbm{I&0\\0&Q}$ more explicitly. 
\begin{align*}
\bbm{I&0\\0&Q}\inv A\bbm{I&0\\0&Q}=&\bbm{I&0\\0&Q}\inv \bbm{M&C\\D&F}\bbm{I&0\\0&Q}\\
=&\bbm{I&0\\0&Q\inv}\bbm{M&CQ\\D&FQ}\\
=&\bbm{M&CQ\\Q\inv D&Q\inv FQ}.
\end{align*}
Then we have
\begin{align*}
\mc{R}\ps{\lambda,S,\bbm{M&CQ\\Q\inv D&Q\inv FQ}}=&M+CQ\ps{\li-Q\inv FQ}\inv Q\inv D\\
=&M+CQ\ps{Q\inv\ps{\li-F}Q}\inv Q\inv D\\
=&M+CQQ\inv\ps{\li-F}\inv QQ\inv D\\
=&M+C\ps{\li-F}\inv D\\
=&\mc{R}\ps{\lambda,S,\bbm{M&C\\D&F}}\\
=&\red.
\end{align*}
\end{proof}

\begin{theorem}\label{Poles of the Reduction are eigenvalues of F}
The poles of $\mc{R}\ps{\lambda,S,\mcdf}$ (where $S$ indexes the $M$ block) occur at eigenvalues of $F$.
\end{theorem}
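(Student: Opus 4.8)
The plan is to work directly from the defining formula $\red=\rdef$ and isolate the only source of $\lambda$-dependence, namely the resolvent $\lifi$. First I would invoke the adjugate (classical adjoint) formula, valid wherever $\lif$ is invertible:
\[
\lifi=\frac{1}{\det\ps{\lif}}\,\operatorname{adj}\ps{\lif}.
\]
The entries of $\operatorname{adj}\ps{\lif}$ are cofactors of $\lif$, hence polynomials in $\lambda$, while $\det\ps{\lif}$ is exactly the characteristic polynomial of $F$, whose roots are the eigenvalues of $F$.

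Substituting into the definition yields
\[
\red=M+\frac{1}{\det\ps{\lif}}\,C\,\operatorname{adj}\ps{\lif}\,D.
\]
Since $M$ is a constant matrix and $C\,\operatorname{adj}\ps{\lif}\,D$ has polynomial entries, every entry of $\red$ is a rational function whose denominator divides $\det\ps{\lif}$. Thus, before any simplification, the only candidate poles of any entry are the roots of the characteristic polynomial of $F$.

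The one point requiring care — and the only real obstacle here — is the convention that entries of the reduction live in $\W$, i.e. are written in lowest terms with numerator and denominator sharing no common factor. The hard part is to rule out that this reduction to lowest terms introduces a genuine pole somewhere new. I would observe that passing to lowest terms can only \emph{cancel} factors of the denominator, so it can only \emph{remove} candidate poles and never create a pole at a value that was not already a root of $\det\ps{\lif}$. Hence the set of actual poles of $\red$ is contained in the set of eigenvalues of $F$, which is precisely the claim. I would also note in passing that no pole occurs ``at infinity'': the constraint $\deg p\le\deg q$ in the definition of $\W$, together with $\deg\det\ps{\lif}=\vs{\overline{S}}$ dominating the numerator degrees, guarantees each entry is proper, so the analysis above accounts for all poles.
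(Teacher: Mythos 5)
Your proposal is correct and follows the same route as the paper, whose entire proof is the one-line remark that the claim ``follows directly from the definition $\red=\rdef$''; you have simply made that remark rigorous by writing $\lifi=\operatorname{adj}\ps{\lif}/\det\ps{\lif}$ and noting that cancellation to lowest terms can only remove candidate poles. Your version is more careful than the paper's (in particular the observation that reduction to lowest terms cannot create new poles, and the degree count ruling out a pole at infinity), but it is the same underlying argument.
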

\begin{proof}
% It's clear that $\rdef$ will have the same poles as $C\lifi D$. Those will be a subset of the poles of $\lifi$, which occur at values of $\lambda$ for which $\lif$ is singular. These are exactly the eigenvalues of $F$. Note that not all eigenvalues of $F$ are necessarily poles of $\rdef$; for instance, the reduction of $\bbm{0&0\\0&0}$ to its top-left-entry submatrix is $0$, which has no poles at all. If $C^*$ and $D$ have linearly independent rows then all poles will be present, but this is not a necessary condition. 
This follows directly from the definition $\red=\rdef$.
\end{proof}

\begin{theorem}\label{Constant part of the reduction is M}
Let $A=\mcdf$ where the $M$ block is indexed by $S$. The entries of the reduction $\red$ are rational functions in $\lambda$. Writing $\red$ in its canonical partial-fraction-decomposition form, the constant part of $\red$ is $M$.
\begin{proof}
% The partial fraction decomposition form is $R=M+\sum_{i\in I}\frac{1}{\lambda-\nu_i}R_i$, where $i$ ranges over the eigenvalue-eigenspace pairs of $F$ (by the above theorem). The $\nu_i$ are the poles
% The partial fraction decomposition of $R$ tells us that $R$ consists of a constant part plus several functions that decrease with powers of $\frac{1}{\lambda}$. 
The Frobenius norm of $\ps{\red-M}$ is equal to that of $C\lifi D$. This is bounded by $\Vs{C}\Vs{\lifi}\Vs{D}$. %, which is in turn bounded by $\Vs{C}\Vs{D}$ times the spectral radius of $\lifi$.
Since $\Vs{\lifi}$ can be made arbitrarily small by choosing large $\lambda$, the norm of this difference approaches $0$ as $\lambda\xrightarrow[]{}\infty$. Hence the limit of $\red$ as $\lambda\xrightarrow[]{}\infty$ is $M$.
\end{proof}
\end{theorem}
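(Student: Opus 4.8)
The plan is to identify the ``constant part'' of the partial-fraction decomposition with the value of the reduction at infinity, and then compute that limit directly from the formula $\red=\rdef$. Since the entries of $\red$ lie in $\W$, each entry is a rational function $p(\lambda)/q(\lambda)$ with $\deg(p)\le\deg(q)$, so its partial-fraction decomposition consists of a single constant plus a sum of proper terms of the form $a/\ps{\lambda-r}^k$; there is no higher-degree polynomial part precisely because $\deg(p)\le\deg(q)$. That constant is exactly $\lim_{\lambda\to\infty}$ of the entry, since every proper term vanishes at infinity. Hence it suffices to show that $\red\to M$ entrywise as $\lambda\to\infty$.

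To compute the limit, I would isolate the second summand, namely $\red-M=C\lifi D$. By Theorem \ref{Poles of the Reduction are eigenvalues of F} the only poles of this quantity occur at eigenvalues of $F$, all of which are finite, so for $\vs{\lambda}$ larger than the spectral radius of $F$ the Neumann expansion $\lifi=\lambda\inv\sum_{k\ge0}\ps{F/\lambda}^k$ is valid and shows that every entry of $\lifi$ is $O\ps{1/\lambda}$. Multiplying on the left by $C$ and on the right by $D$ preserves this decay, so each entry of $C\lifi D$ tends to $0$. Equivalently, and more compactly, one can bound $\Vs{\red-M}=\Vs{C\lifi D}\le\Vs{C}\,\Vs{\lifi}\,\Vs{D}$ in Frobenius norm and note that $\Vs{\lifi}\to0$ as $\lambda\to\infty$.

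Either way we conclude $\lim_{\lambda\to\infty}\red=M$, and combining this with the first paragraph gives that the constant part of the partial-fraction decomposition of $\red$ is $M$. I expect the only genuine subtlety to be the bookkeeping in the first step: one must confirm both that there is no nonconstant polynomial contribution (guaranteed by the $\W$-membership, i.e. $\deg(p)\le\deg(q)$) and that all of the proper pole terms — finite in number, since $F$ has finitely many eigenvalues — decay at infinity. Once these two facts are in hand, the identification of the constant part with the limit at infinity is immediate, and the limit computation itself is routine.
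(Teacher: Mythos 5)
Your proposal is correct and its core argument --- bounding $\Vs{\red-M}=\Vs{C\lifi D}\le\Vs{C}\Vs{\lifi}\Vs{D}$ and letting $\lambda\to\infty$ --- is exactly the paper's proof. The extra bookkeeping you supply (that membership in $\W$ rules out a nonconstant polynomial part, so the constant term of the partial-fraction decomposition really is the limit at infinity) is a worthwhile explicit justification of a step the paper leaves implicit, but it does not change the approach.
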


\begin{theorem}\label{Residues of the Reduction}
Let a pole $\mu$ of $\red$ be given where $F$ is a normal matrix. The residue of $\red$ at $\mu$ is $CE_{\mu}D$, where $E$ is the orthogonal projection onto the $\mu$-eigenspace of $F$.
\end{theorem}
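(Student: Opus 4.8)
The plan is to isolate the $\lambda$-dependence and then exploit the spectral theorem for normal matrices. Since $\red=\rdef$ and $M$ is constant in $\lambda$, the matrix $M$ contributes nothing to any residue, so the residue of $\red$ at $\mu$ equals the residue of the resolvent term $C\lifi D$ at $\mu$. Everything therefore reduces to understanding the poles and residues of the matrix resolvent $\lifi$, which we already know (by Theorem \ref{Poles of the Reduction are eigenvalues of F}) are located at the eigenvalues of $F$.

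First I would invoke the spectral theorem: because $F$ is normal it admits an orthogonal spectral decomposition $F=\sum_j \mu_j E_{\mu_j}$, where the sum runs over the distinct eigenvalues $\mu_j$ of $F$, each $E_{\mu_j}$ is the orthogonal projection onto the corresponding eigenspace, the projections are mutually orthogonal (so $E_{\mu_j}E_{\mu_k}=0$ for $j\neq k$), and $\sum_j E_{\mu_j}=I$. From this, the resolvent factors cleanly as
\[
\lifi=\sum_j \frac{1}{\lambda-\mu_j}E_{\mu_j}.
\]
One verifies this by writing $\lif=\sum_j\ps{\lambda-\mu_j}E_{\mu_j}$, multiplying the two sums, and using orthogonality and idempotence of the projections to collapse the double sum back to $\sum_j E_{\mu_j}=I$. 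The key structural consequence is that every pole of the resolvent is \emph{simple}.

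Substituting into the reduction yields the partial-fraction form
\[
\red=M+\sum_j \frac{1}{\lambda-\mu_j}\,C E_{\mu_j} D,
\]
and reading off the coefficient of $\frac{1}{\lambda-\mu}$ — equivalently, computing $\lim_{\lambda\to\mu}\ps{\lambda-\mu}\red$ — gives the residue $C E_\mu D$, as claimed.

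The main (and essentially only) obstacle is justifying the resolvent's partial-fraction expansion, and the role of normality is exactly to supply it: normality gives the orthogonal spectral decomposition, and the mutual orthogonality of the projections is precisely what forces every pole to be simple, so that the residue equals the bare coefficient $C E_\mu D$ rather than involving higher-order Laurent terms. Without normality — say, if $F$ had a nontrivial Jordan block at $\mu$ — the resolvent could have a higher-order pole and the residue would be more complicated; so the normality hypothesis is doing real work here and cannot simply be dropped.
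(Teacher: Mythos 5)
Your proof is correct and follows essentially the same route as the paper's: both decompose the normal matrix $F$ spectrally into orthogonal eigenprojections, expand the resolvent $\lifi$ as a sum of simple-pole terms, and extract the residue via $\lim_{\lambda\to\mu}\ps{\lambda-\mu}\red = CE_\mu D$. Your added remarks on why normality forces simple poles are a nice supplement but do not change the argument.
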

\begin{proof}
Let the eigendecomposition of $F$ be $F=\sum\nu E_X$, with the sum ranging over eigenvalue-eigenspace pairs of $F$ and $E_X$ being the orthogonal projection matrix onto $X$. We may calculate
\begin{align*}
\lim_{\lambda\ar\nu}\ps{\lambda-\mu}\red=&\lim_{\lambda\ar\nu}\ps{\lambda-\mu}\ps{\rdef}\\
=&\lim_{\lambda\ar\nu}\ps{\lambda-\mu}\ps{M+C\ps{\lambda I-F}\inv D}\\
=&0+C\ps{\lim_{\lambda\ar\nu}\ps{\lambda-\mu}\ps{\lambda I-F}\inv}D\\
=&0+C\ps{\lim_{\lambda\ar\nu}\ps{\lambda-\mu}\ps{\lambda I-\sum\nu E_X}\inv}D\\
=&0+C\ps{\lim_{\lambda\ar\nu}\ps{\lambda-\mu}\sum\ps{\lambda-\nu}\inv E_X}D\\
=&0+C\ps{\lim_{\lambda\ar\nu}\ps{\lambda-\mu}\ps{\lambda-\mu}\inv E_{X_\mu}}D\\
=&CE_{X_{\mu}}D
\end{align*}
\end{proof}

\subsection{A New Perspective on Isospectral Reductions}\label{sec:new_form}

\begin{theorem}\label{Formula 2 for the reduction}
Given $A=\mcdf$ with rows and columns indexed by $V$ and partitioned as $S\cup\overline{S}$, then %the reduction of $A$ to its $M$ block is equivalently given by 
\[\red=\li-\ps{\Sigma^T\ps{\li-A}\inv\Sigma}\inv,\] where $\Sigma=\bbm{I\\0}$ has rows indexed by $S\cup\overline{S}$ and column indexed by $S$.%, and where $+$ is the Moore-Penrose pseudoinverse.
\end{theorem}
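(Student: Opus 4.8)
The plan is to observe that the quantity $\Sigma^T\ps{\li-A}\inv\Sigma$ on the right-hand side is nothing other than the $S\times S$ principal block of the resolvent $\ps{\li-A}\inv$. Indeed, since $\Sigma=\bbm{I\\0}$ has its identity block in the rows/columns indexed by $S$, left-multiplication by $\Sigma^T$ and right-multiplication by $\Sigma$ extracts exactly the top-left block of whatever matrix sits between them. Thus the entire proof reduces to computing that one block of $\ps{\li-A}\inv$ and comparing it against $\li-\red$.

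First I would write $\li-A=\bbm{\li-M&-C\\-D&\lif}$ in block form conformal to the partition $S\cup\overline S$. The standard Schur-complement formula for the inverse of a $2\times2$ block matrix gives that the top-left block of $\ps{\li-A}\inv$ equals $\ps{\ps{\li-M}-\ps{-C}\lifi\ps{-D}}\inv=\ps{\li-M-C\lifi D}\inv$, valid wherever $\lif$ is invertible. The next step is simply to recognize the expression inside this inverse: by the definition of the isospectral reduction, $\li-M-C\lifi D=\li-\ps{\rdef}=\li-\red$. Combining these gives $\Sigma^T\ps{\li-A}\inv\Sigma=\ps{\li-\red}\inv$, and inverting both sides and rearranging yields $\red=\li-\ps{\Sigma^T\ps{\li-A}\inv\Sigma}\inv$, as claimed.

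The only real care needed concerns invertibility and the domain of validity, and this is where the mild (rather than substantive) obstacle lies. The identity should be read as an equality of matrices of rational functions in $\lambda$, so I would note that $\lif$ is invertible for all $\lambda$ outside the finite set of eigenvalues of $F$ (already the location of the poles by Theorem \ref{Poles of the Reduction are eigenvalues of F}), that $\li-A$ is invertible away from the eigenvalues of $A$, and that the Schur complement $\li-\red$ is invertible for all but finitely many $\lambda$. Since two matrices of rational functions agreeing on a cofinite set of values agree identically, the displayed identity holds as an identity in $\W^{|S|\times|S|}$. The heart of the argument is therefore the one-line Schur-complement computation together with the observation that $\Sigma^T(\cdot)\Sigma$ is block extraction; the remaining work is only bookkeeping the finitely many excluded values of $\lambda$.
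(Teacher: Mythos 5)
Your proof is correct, but it takes a genuinely different route from the paper. You compute the $S\times S$ block of the resolvent directly via the standard block-inverse formula: the $(1,1)$ block of $\ps{\li-A}\inv$ is the inverse of the Schur complement of $\lif$ in $\li-A$, namely $\ps{\li-M-C\lifi D}\inv=\ps{\li-\red}\inv$, and the identity follows by inverting and rearranging. The paper instead introduces the bordered matrix $\bbm{0&I&0\\I&M&C\\0&D&F}$ and computes its isospectral reduction to the first block in two ways --- directly, which yields $\Sigma^T\ps{\li-A}\inv\Sigma$, and in two stages via Theorem \ref{Reduction commutativity}, which yields $\ps{\li-\red}\inv$ --- then equates the results. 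Your argument is more elementary and self-contained: it needs only standard linear algebra and makes explicit the (well-known but illuminating) fact that $\li-\red$ is exactly the Schur complement of $\lif$ in $\li-A$. The paper's argument stays entirely inside the isospectral-reduction formalism and leans on the already-established commutativity of successive reductions, which keeps the computation short and foreshadows the generalized reductions of Section \ref{sec:gen_red}. Your care about the domain of validity (equality of rational-function matrices away from finitely many poles) is appropriate and is implicitly how the paper treats all such identities in $\W^{|S|\times|S|}$.
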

\begin{proof}
Examine the matrix $\bbm{0&I&0\\I&M&C\\0&D&F}$. Compute the reduction of this matrix to its first block in two different ways. First, reducing directly to the first block gives $0+\bbm{I&0}\ps{\li-\mcdf}\inv\bbm{I\\0}=\Sigma^T\ps{\li-A}\inv\Sigma$. Secondly, reduce in stages---first to the block defined by the first two partition classes as 
\begin{align*}
&\bbm{0&I\\I&M}+\bbm{0\\C}\ps{\li-F}\inv\bbm{0&D}\\=&\bbm{0&I\\I&M}+\bbm{0\\C\ps{\li-F}\inv}\bbm{0&D}\\=&\bbm{0&I\\I&M}+\bbm{0&0\\0&C\ps{\li-F}\inv D}\\=&\bbm{0&I\\I&M+C\ps{\li-F}\inv D}\\=&\bbm{0&I\\I&\red}
\end{align*}
and then to the first block directly: $0+I\ps{\li-\red}\inv I=\ps{\li-\red}\inv$. These two ways of calculating the reduction must give equivalent results by Theorem \ref{Reduction commutativity}, so $\ps{\li-\red}\inv=\Sigma^T\ps{\li-A}\inv\Sigma$. The result follows immediately.
\end{proof}

\subsection{Walk Generating Functions}\label{sec:gen_fun}
 While Theorem \cite{kempton2020characterizing} is interesting in its own right as an alternate way of thinking of and computing the isospectral reduction, we will see in this section how is reveals information about walk generating functions.  This stems from a combinatorial interpretation of the walk generating functions.

We will recall some notation from $\cite{kempton2020characterizing}$.  Let $G$ be a graph and $S$ a subset of the vertex set.  We define two walk generating functions.  First, define \[W_S(t) = \sum_{\ell=0}^\infty w_\ell(S)t^\ell\] where $w_\ell(S)$ denotes the $|S|\times |S|$ matrix whose $(a,b)$ entry is the number of walks of length $\ell$ in $G$ beginning at $a$ and ending at $b$. That is, $W_S(t)$ is the generating function for enumerating walks in the subset $S$.  It is easy to see that \[W_S(t) = \left((I-tA)^{-1}\right)_{S,S}.\]  We will denote by \[W_S^*(t) = \sum_{\ell=1}^\infty w_\ell^*(S)t^\ell\] where $w_\ell^*(S)$ is the $|S|\times|S|$ matrix whose $(a,b)$ entry is the number of walks of length $\ell$ from $a$ to $b$ in $G$ that leave $S$ on their first step and do not return till the last step (the $S$-non-returning walks from $a$ to $b$).  In \cite{kempton2020characterizing}, the following relationship between $W_S^*(t)$ and $\mc{R}(\lambda,S,A)$ was obtained.

\begin{lemma}[Theorem 3.6 of \cite{kempton2020characterizing}]\label{lem:redgenfunct}
\[\red=\frac1\lambda W^*_S\left(\frac{1}{\lambda}\right).\]
\end{lemma}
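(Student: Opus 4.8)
The plan is to combine the new expression for the reduction from Theorem \ref{Formula 2 for the reduction} with a ``first-return'' decomposition of walks relating $W_S$ and $W_S^*$; the former already writes $\red$ in terms of the $(S,S)$-block of a resolvent, which is exactly a walk generating function, so only a combinatorial identity remains. First I would rewrite the reduction in terms of $W_S$: since $\Sigma^T\ps{\li-A}\inv\Sigma$ is the $(S,S)$-block of $\ps{\li-A}\inv$, expanding the Neumann series $\ps{\li-A}\inv=\sum_{k\ge0}\lambda^{-(k+1)}A^k$ (for $|\lambda|$ large) and using $\ps{A^k}_{S,S}=w_k(S)$ gives $\Sigma^T\ps{\li-A}\inv\Sigma=\tfrac1\lambda W_S\ps{\tfrac1\lambda}$. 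Feeding this into Theorem \ref{Formula 2 for the reduction} yields $\red=\li-\lambda\,W_S\ps{\tfrac1\lambda}\inv$, so everything reduces to understanding $W_S(t)\inv$.

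The second, and main, ingredient is the identity $W_S(t)\inv=I-W_S^*(t)$, which I would obtain in either of two equivalent ways. Combinatorially, every walk from $a\in S$ to $b\in S$ factors uniquely as a concatenation of \emph{non-returning} segments---each segment being either a single edge inside $S$ or an excursion that leaves $S$ and first returns only at its final step---and since concatenation corresponds to matrix multiplication of generating functions, summing over the number of segments gives the geometric series $W_S(t)=\sum_{m\ge0}\ps{W_S^*(t)}^m=\ps{I-W_S^*(t)}\inv$. Algebraically, the same identity drops out of the Schur-complement formula for the $(S,S)$-block of $\ps{I-tA}\inv$, giving $W_S(t)\inv=I-tM-t^2C\ps{I-tF}\inv D$; expanding $\ps{I-tF}\inv$ as a Neumann series and matching coefficients then identifies $tM+t^2C\ps{I-tF}\inv D$ as exactly the non-returning generating function $W_S^*(t)$. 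Substituting $W_S\ps{\tfrac1\lambda}\inv=I-W_S^*\ps{\tfrac1\lambda}$ into $\red=\li-\lambda\,W_S\ps{\tfrac1\lambda}\inv$ and simplifying then produces the stated relationship between $\red$ and $W_S^*\ps{\tfrac1\lambda}$.

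The step I expect to be the main obstacle is making the first-return decomposition fully rigorous: showing that the factorization of an $S$-to-$S$ walk into non-returning segments is a genuine bijection, correctly accounting for the length-one within-$S$ edges (which is where the $tM$ term comes from), and verifying that the resulting product of generating functions converges and agrees with $W_S$ coefficient by coefficient. A secondary point requiring care is the bookkeeping of powers of $\lambda$ under $t=\tfrac1\lambda$: a non-returning walk of length $\ell$ carries a factor $\lambda^{-\ell}$, and tracking these through the inversion is what pins down the exact normalizing power of $\lambda$ in the final identity. The algebraic Schur-complement route is attractive precisely because it sidesteps these combinatorial subtleties and gives an independent verification.
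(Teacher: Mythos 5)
First, a structural point: the paper does not prove this lemma at all --- it is imported verbatim from \cite{kempton2020characterizing} --- so there is no in-paper argument to compare against, and supplying a proof is genuinely additive. Your argument is essentially sound, and the Schur-complement route you sketch is the right way to make it airtight: from $I-tA=\bbm{I-tM&-tC\\-tD&I-tF}$ one gets $W_S(t)\inv=I-tM-t^2C\ps{I-tF}\inv D$, and expanding $\ps{I-tF}\inv=\sum_k t^kF^k$ identifies the coefficient of $t^\ell$ in $tM+t^2C\ps{I-tF}\inv D$ with $w_\ell^*(S)$, since the entries of $CF^{\ell-2}D$ count exactly the walks whose interior vertices avoid $S$ and the $tM$ term supplies the length-one walks. (In fact this coefficient matching applied directly to $\red=M+C\lifi D$ proves the lemma in one line; the detour through Theorem \ref{Formula 2 for the reduction} and $W_S$ is not needed.) One caution: the identity $W_S(t)\inv=I-W_S^*(t)$ that you call your main ingredient is precisely Theorem \ref{thm:gen_funct}, which the paper deduces \emph{from} this lemma; you avoid circularity only because you prove it independently, and a writeup should make that independence explicit.

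Second, and more importantly, your final substitution does not land on the statement as printed. Carrying out the last step gives $\red=\li-\lambda W_S\ps{\tfrac1\lambda}\inv=\li-\lambda\ps{I-W_S^*\ps{\tfrac1\lambda}}=\lambda W_S^*\ps{\tfrac1\lambda}$, which differs from the displayed claim $\red=\tfrac1\lambda W_S^*\ps{\tfrac1\lambda}$ by a factor of $\lambda^2$. With the definitions in this paper the printed version cannot be correct: by Theorem \ref{Constant part of the reduction is M} the $\lambda^0$ term of $\red$ is $M$, whereas $\tfrac1\lambda W_S^*\ps{\tfrac1\lambda}$ contains no powers of $\lambda$ above $\lambda^{-2}$; moreover the paper's own deduction of $W_S^*(t)=I-W_S(t)\inv$ in the proof of Theorem \ref{thm:gen_funct} only goes through if the lemma reads $\red=\lambda W_S^*\ps{\tfrac1\lambda}$. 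So your computation is right and the lemma statement carries a typo, but you should say this explicitly rather than asserting that the substitution ``produces the stated relationship'' --- as written, the proposal claims to derive an identity that its own steps contradict.
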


Furthermore, in \cite{kempton2020characterizing}, it was proven that in the specific instance when $S=\{a\}$ consists of a single vertex, then the relationship between $W_a$ and $W_a^*$ is given by
\[
W_a(t)=\frac{1}{1-W_a^*(t)}.
\]

This can be proven from direct combinatorial methods, or from identities involving isospectral reductions (see Section 3.2 of \cite{kempton2020characterizing}). Theorem \ref{Formula 2 for the reduction} immediately gives a generalization of this identity to arbitrary subsets.

\begin{theorem}\label{thm:gen_funct}
Let $G$ be a graph and $S$ some subset of the vertex set.  Then
\[
W_S(t) = (I-W_S^*(t))^{-1}.
\]
\end{theorem}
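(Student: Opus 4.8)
The plan is to connect the two combinatorial objects $W_S(t)$ and $W_S^*(t)$ through their respective relationships to the isospectral reduction, using Theorem \ref{Formula 2 for the reduction} as the bridge. We already have two key facts. First, by direct combinatorial reasoning, $W_S(t) = \ps{\ps{I-tA}\inv}_{S,S}$, which in the notation of Theorem \ref{Formula 2 for the reduction} is exactly $\Sigma^T\ps{I-tA}\inv\Sigma$. Second, Lemma \ref{lem:redgenfunct} gives $\red = \frac{1}{\lambda}W_S^*\ps{\frac{1}{\lambda}}$, relating the non-returning generating function to the reduction.

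First I would rewrite the new formula from Theorem \ref{Formula 2 for the reduction}, namely $\red = \li - \ps{\Sigma^T\ps{\li-A}\inv\Sigma}\inv$, so that the inner expression $\Sigma^T\ps{\li-A}\inv\Sigma$ is expressed in terms of the walk generating function $W_S$. The substitution to make is $\lambda = 1/t$. Observe that $\ps{\li-A}\inv = \ps{\frac{1}{t}I - A}\inv = t\ps{I-tA}\inv$, so that $\Sigma^T\ps{\li-A}\inv\Sigma = t\cdot\Sigma^T\ps{I-tA}\inv\Sigma = t\,W_S(t)$. I would then substitute this into the rearranged formula.

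Next I would assemble the pieces. Solving the formula of Theorem \ref{Formula 2 for the reduction} for the inner term gives $\ps{\li-A}\inv$ sandwiched as $\Sigma^T\ps{\li-A}\inv\Sigma = \ps{\li-\red}\inv$. Setting $\lambda = 1/t$ and using Lemma \ref{lem:redgenfunct} to write $\mc{R}\ps{\frac{1}{t},S,A} = t\,W_S^*(t)$, the right-hand side becomes $\ps{\frac{1}{t}I - t\,W_S^*(t)}\inv$. Equating this with the left-hand side $t\,W_S(t)$ obtained above, I get $t\,W_S(t) = \ps{\frac{1}{t}I - t\,W_S^*(t)}\inv$. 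The final step is purely algebraic: invert both sides, factor out the scalar $\frac{1}{t}$, and simplify to obtain $W_S(t) = \ps{I - W_S^*(t)}\inv$.

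The main obstacle is bookkeeping rather than conceptual depth: I must be careful with the scalar factors of $t$ that appear when converting between the $\lambda$-variable of the reduction and the $t$-variable of the generating functions, and I should track which side carries the inverse at each stage so the cancellations line up. I would also want to note briefly that these formal power series and rational-function identities are valid as identities of matrix-valued functions (equivalently, for $\lambda$ large / $t$ near $0$ where the relevant inverses and series converge), so that manipulating $\ps{\li-A}\inv$ and the geometric-series expansion defining $W_S$ is justified.
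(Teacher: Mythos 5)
Your approach is the same as the paper's: set $t=1/\lambda$, use Theorem \ref{Formula 2 for the reduction} to identify $\left((\lambda I-A)^{-1}\right)_{S,S}=t\,W_S(t)$ with $\left(\lambda I-\mathcal{R}(\lambda,S,A)\right)^{-1}$, then invoke Lemma \ref{lem:redgenfunct} to replace the reduction by the non-returning generating function. However, your final step fails as written. From the two intermediate identities you state,
\[
t\,W_S(t)=\left(\tfrac1t I-\mathcal{R}\left(\tfrac1t,S,A\right)\right)^{-1}
\qquad\text{and}\qquad
\mathcal{R}\left(\tfrac1t,S,A\right)=t\,W_S^*(t),
\]
inverting and clearing scalars gives $W_S(t)^{-1}=I-t^2\,W_S^*(t)$, not $I-W_S^*(t)$: factoring $\tfrac1t$ out of $\tfrac1t I-t\,W_S^*(t)$ leaves $I-t^2\,W_S^*(t)$ behind, and the stray $t^2$ does not cancel against anything. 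So ``invert, factor out $\tfrac1t$, and simplify'' does not produce the theorem from the equation you wrote down.

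The source of the mismatch is the normalization in Lemma \ref{lem:redgenfunct}. The relation that actually holds (and that the paper's own proof implicitly uses, consistent with Theorem 3.6 of \cite{kempton2020characterizing}) is $\mathcal{R}(\lambda,S,A)=\lambda\,W_S^*(1/\lambda)$, equivalently $\mathcal{R}(1/t,S,A)=\tfrac1t W_S^*(t)$; the statement as printed in this paper has the reciprocal on the wrong side. You can check this on $K_2$ with $S$ a single vertex: there $\mathcal{R}(\lambda,S,A)=1/\lambda$ while $W_S^*(t)=t^2$, so $\tfrac1\lambda W_S^*(1/\lambda)=\lambda^{-3}\neq\lambda^{-1}$, whereas $\lambda W_S^*(1/\lambda)=\lambda^{-1}$. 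With the corrected normalization your computation closes up exactly as the paper's does: $\tfrac1t W_S^*(t)=\tfrac1t I-\tfrac1t W_S(t)^{-1}$, hence $W_S^*(t)=I-W_S(t)^{-1}$ and $W_S(t)=(I-W_S^*(t))^{-1}$. The scalar bookkeeping you yourself flagged as the main obstacle is precisely where the argument currently breaks, and testing the lemma's normalization on a one-vertex subset would have caught it.
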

\begin{proof}
Let $t=\frac1\lambda$.  From Theorem \ref{Formula 2 for the reduction} we have
\begin{align*}
    \red &= \lambda I - \left(\left((\lambda I-A)^{-1}\right)_{S,S}\right)^{-1}\\
    &=\frac1t I - \frac1t\left(\left(( I-tA)^{-1}\right)_{S,S}\right)^{-1}.
\end{align*}
Then from Lemma \ref{lem:redgenfunct} we have that
\[
W^*_S(t) = I-W_S(t)^{-1}
\]
from which the result follows.
\end{proof}

Thus, any property of a subset of a graph that can be determined by the walk generating function can also be determined by the isospectral reduction to that subset.

We may interpret the formula $\ps{\li-\red}\inv=\Sigma^*\ps{\li-A}\inv\Sigma$ to say that the $S$-non-returning walk generating function provided by the isospectral reduction of $A$ to $S$ is \textit{exactly what we need to use in place of edges} to calculate the walk generating function of $A$ restricted to $S$ using the normal formula. Isospectral reductions are like generalized edges in this sense. Just as edges are the minimal walks in a graph which compose to give all other walks and walk-count functions through the walk generating function power series, non-returning walks are the minimal walks from the set $S$ to itself which compose by the same power series to give all walks from $S$ to itself. %The powers of $\lambda$ in the reduction act like a corrective factor to account for the hidden length of a non-returning walk, and length-one walks/edges from the $M$ block are given no $\lambda$-dependent weight, just as edges are given no $\lambda$-dependent weight in the standard generating function formula. 

%With this in mind, the general reduction formula can be interpreted similarly. We are trying to find the things which act like edges in the walk-generating power series to give us the walk-generating function of $A$, projected by $\Sigma$ to some subspace. The formula works by directly computing this projected walk counting function and then using the inverse of the function sending a matrix to its walk-generating function to solve for the matrix-valued function of $\lambda$ that fills this role. Indeed, the generalized reduction will play a similar role to edges in many power series, as we will demonstrate in the next section.%conjugation of a restriction-to-the-$S\times S$-submatrix operation by the function sending a matrix to its off-by-one walk-generating function. Just as we can substitute reductions in for edges when calculating

\subsection{Generalized Isospectral Reductions}\label{sec:gen_red}
In addition to giving us another way to compute isospectral reductions, the alternate reduction formula from Theorem \ref{Formula 2 for the reduction} suggests a generalization of the isospectral reduction. 
\begin{definition}\label{Generalized reduction definition}
For a matrix $\Sigma$ with full column rank and rows indexed by the vertex set of the graph with adjacency matrix $A$, define the generalized reduction of $A$ with respect to $\Sigma$ in terms of $\lambda$ to be \[\mc{R}\ps{\lambda,\Sigma,A}:=\li-\ps{\Sigma^*\ps{\li-A}\inv\Sigma}\inv.\]
\end{definition}

\begin{remark}
    In our definition, we are requiring $\Sigma$ to have orthonormal columns.  We could define this more generally where $\Sigma$ simply has linearly independent columns, and replace $\Sigma^*$ with the pseudoinverse of $\Sigma$ in the definition.  However, the assumption of orthonormal column will make some of the theory easier, and will be sufficiently general for all that we will be doing.
\end{remark}

This formula provides a new way of thinking of many of the classic results about isospectral reductions.  For instance we have the following.

\begin{theorem}
Let $\Sigma_1$ be an $n\times k$ matrix with orthonormal columns and $\Sigma_2$ a $k\times r$ matrix with orthonormal columns.  Then $\Sigma_1\Sigma_2$ has orthonormal columns and
\[\mc{R}(\lambda,\Sigma_1\Sigma_2,A) = \mc{R}(\lambda,\Sigma_2,\mc{R}(\lambda,\Sigma_1,A)).\]
\end{theorem}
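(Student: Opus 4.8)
The plan is to verify two things: first, that $\Sigma_1\Sigma_2$ has orthonormal columns, and second, that the composition formula holds. The orthonormality is a quick computation: since $\Sigma_1^*\Sigma_1 = I_k$ and $\Sigma_2^*\Sigma_2 = I_r$, we have $(\Sigma_1\Sigma_2)^*(\Sigma_1\Sigma_2) = \Sigma_2^*\Sigma_1^*\Sigma_1\Sigma_2 = \Sigma_2^*\Sigma_2 = I_r$, so the columns are orthonormal. This lets us use Definition \ref{Generalized reduction definition} for each of the three reductions appearing in the statement.

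For the main identity, the natural approach is to unwind the right-hand side using the new formula from Definition \ref{Generalized reduction definition} and reduce it to the left-hand side. First I would set $B := \mc{R}(\lambda,\Sigma_1,A) = \li - (\Sigma_1^*(\li-A)\inv\Sigma_1)\inv$, so that by definition $\li - B = (\Sigma_1^*(\li-A)\inv\Sigma_1)\inv$, and hence $(\li-B)\inv = \Sigma_1^*(\li-A)\inv\Sigma_1$. Next, applying the definition again to the outer reduction gives $\mc{R}(\lambda,\Sigma_2,B) = \li - (\Sigma_2^*(\li-B)\inv\Sigma_2)\inv$. Substituting the expression for $(\li-B)\inv$ into this yields
\[
\mc{R}(\lambda,\Sigma_2,B) = \li - \big(\Sigma_2^*\,\Sigma_1^*(\li-A)\inv\Sigma_1\,\Sigma_2\big)\inv = \li - \big((\Sigma_1\Sigma_2)^*(\li-A)\inv(\Sigma_1\Sigma_2)\big)\inv,
\]
which is exactly $\mc{R}(\lambda,\Sigma_1\Sigma_2,A)$ by the definition applied to the composite matrix $\Sigma_1\Sigma_2$. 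The identity then follows directly.

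The step I expect to require the most care is the passage from $\li - B = (\Sigma_1^*(\li-A)\inv\Sigma_1)\inv$ to a usable form for $(\li - B)\inv$, and more precisely the justification that $\li - B$ is invertible (as a matrix over the field of rational functions in $\lambda$) so that inverting it recovers $\Sigma_1^*(\li-A)\inv\Sigma_1$. Since $\Sigma_1^*(\li-A)\inv\Sigma_1$ is itself invertible generically in $\lambda$ (it is the object we inverted to define $B$), this is not a genuine obstacle, but I would state it explicitly to keep the algebra honest. The remaining manipulation is purely formal matrix algebra over $\W$, relying only on the associativity of the products $\Sigma_2^*\Sigma_1^*$ and $\Sigma_1\Sigma_2$ and on the observation that $(\Sigma_1\Sigma_2)^* = \Sigma_2^*\Sigma_1^*$, so no deeper structural input is needed beyond the generalized reduction formula.
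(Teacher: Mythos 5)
Your proof is correct and is precisely the ``straightforward computation'' the paper alludes to without writing out: unwind Definition \ref{Generalized reduction definition} for the inner reduction, substitute $(\li-B)\inv = \Sigma_1^*(\li-A)\inv\Sigma_1$ into the outer one, and regroup using $(\Sigma_1\Sigma_2)^*=\Sigma_2^*\Sigma_1^*$. Your explicit remark about the generic invertibility of $\li-B$ over $\W$ is a reasonable bit of extra care that the paper omits.
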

The proof is a straightforward computation, and Theorem \ref{Reduction commutativity} can be viewed as a simple special case.

\begin{theorem}
Let $\Sigma$ have orthonormal columns, and let $\Delta$ be a matrix whose columns are an orthonormal basis for the orthogonal complement of the columns space of $\Sigma$.
We have \[\det(\lambda I-\mc{R}(\lambda,\Sigma,A)) = \frac{\det(\lambda I-A)}{\det(\lambda I-\Delta^* A\Delta)}.\]
\end{theorem}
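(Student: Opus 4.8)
The plan is to exploit the fact that $\bs{\Sigma \mid \Delta}$ forms a unitary matrix and thereby reduce the claim to the classical Schur-complement determinant identity. Write $U = \bs{\Sigma \mid \Delta}$, an $n\times n$ unitary matrix, since its columns are orthonormal and together span all of $\C^n$, and set $B = \li - A$. Working at a generic value of $\lambda$ for which $B$ is invertible, I would first read off from the definition of the generalized reduction that $\li - \mc{R}(\lambda,\Sigma,A) = \ps{\Sigma^* B\inv \Sigma}\inv$, so that
\[
\det\ps{\li - \mc{R}(\lambda,\Sigma,A)} = \frac{1}{\det\ps{\Sigma^* B\inv \Sigma}}.
\]
The entire argument then comes down to evaluating $\det\ps{\Sigma^* B\inv \Sigma}$.

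The key step is to recognize $\Sigma^* B\inv \Sigma$ as the top-left $k\times k$ block of the conjugated matrix $U^* B\inv U$. Since $U$ is unitary we have $U^* B\inv U = \ps{U^* B U}\inv$, and writing
\[
U^* B U = \bbm{P & Q \\ R & T}, \qquad P = \Sigma^* B \Sigma,\ \ Q = \Sigma^* B \Delta,\ \ R = \Delta^* B \Sigma,\ \ T = \Delta^* B \Delta,
\]
the standard block-inversion formula identifies the top-left block of the inverse with the inverse of the Schur complement of $T$; that is, $\Sigma^* B\inv \Sigma = \ps{P - Q T\inv R}\inv$. Taking determinants gives $\det\ps{\Sigma^* B\inv \Sigma} = 1/\det\ps{P - Q T\inv R}$, and hence $\det\ps{\li - \mc{R}(\lambda,\Sigma,A)} = \det\ps{P - Q T\inv R}$. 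The companion Schur-complement determinant identity $\det\ps{U^* B U} = \det(T)\,\det\ps{P - Q T\inv R}$ then rewrites this as
\[
\det\ps{\li - \mc{R}(\lambda,\Sigma,A)} = \frac{\det\ps{U^* B U}}{\det(T)}.
\]

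To finish, I would simplify the two remaining determinants. Because $U$ is unitary, $\det\ps{U^* B U} = |\det U|^2 \det B = \det\ps{\li - A}$. For the denominator, the orthonormality $\Delta^*\Delta = I$ yields $T = \Delta^*\ps{\li - A}\Delta = \li - \Delta^* A \Delta$, so $\det(T) = \det\ps{\li - \Delta^* A\Delta}$. Substituting gives exactly the asserted identity. Since both sides are rational functions of $\lambda$, establishing the equality on the cofinite set where $B = \li - A$ and $T = \li - \Delta^* A\Delta$ are simultaneously invertible proves it as an identity of rational functions.

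I expect the main obstacle to be bookkeeping rather than conceptual: one must confirm that $T$ is invertible for generic $\lambda$ so that the Schur complement is defined, and verify carefully that the relevant block of $\ps{U^* B U}\inv$ really is $\ps{P - Q T\inv R}\inv$ and not one of the other three blocks. Everything else, namely the unitary conjugation and the reduction via $\Delta^*\Delta = I$, is routine. It is worth noting that this result specializes to Theorem \ref{Eigenvalue preservation} when $\Sigma = \bbm{I\\0}$, in which case $\Delta = \bbm{0\\I}$ and $\Delta^* A \Delta = F$.
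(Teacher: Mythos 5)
Your proof is correct and follows essentially the same route as the paper: both arguments hinge on conjugating by the unitary matrix $U=\bbm{\Sigma&\Delta}$ so that the generalized reduction becomes the standard reduction of $U^*AU$ to its leading block. The only difference is that the paper then cites Theorem \ref{Eigenvalue preservation} to finish, whereas you re-derive that determinant quotient by hand via the Schur-complement and block-inversion identities, which makes your version self-contained but not conceptually distinct.
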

\begin{proof}
Let $X$ be the matrix with the same number of rows and columns as $\Sigma$ whose $i\Th$ column is the $i\Th$ basis vector and let $Y$ be the matrix such that $\bbm{X&Y}=I$ and let $U=\bbm{\Sigma&\Delta}$. Note that $\mc{R}\ps{\lambda,X,A}$ is the standard reduction of $A$ to its first $\dim\ps{\col\ps{\Sigma}}$ rows and columns. Note also that $\Sigma=UX$ and $\Sigma^*=X^*U^*$. We have
\begin{align*}
\det({\li-\mc{R}\ps{\lambda,\Sigma,A}})=&\det\left({\li-\ps{\li-\ps{\Sigma^*\ps{\li-A}\inv\Sigma}\inv}}\right)\\
=&\det\left({\li-\ps{\li-\ps{X^*U^*\ps{\li-A}\inv UX}\inv}}\right)\\
=&\det\left({\li-\ps{\li-\ps{X^*\ps{\li-U^*AU}\inv X}\inv}}\right)\\
=&\det\left({\li-\mc{R}\ps{\lambda,X,U^*AU}}\right)\\
=&\frac{\det\left({\li-A}\right)}{\det\left({\li-Y^*U^*AUY}\right)}\\
=&\frac{\det\left({\li-A}\right)}{\det\left({\li-\Delta^*A\Delta}\right)}
\end{align*}
by Theorem \ref{Eigenvalue preservation}.
\end{proof}

Finally, we can generalize Theorem \ref{thm:eigvect} to apply to eigenvectors of generalized reductions.
\begin{theorem}
If $Au=\lambda_0u$, then \[\mathcal{R}(\lambda_0,\Sigma,A)\Sigma^*u = \lambda_0\Sigma^*u.\]
\end{theorem}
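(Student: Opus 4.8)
The plan is to reduce the generalized statement to the ordinary eigenvector result of Theorem \ref{thm:eigvect} by a unitary change of basis, exactly as in the proof of the preceding determinant formula. The point is that the generalized reduction with respect to $\Sigma$ is, after rotating $\Sigma$ to the standard coordinate subspace, nothing but an ordinary isospectral reduction to a principal block, and ordinary reductions already respect eigenvectors.

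Concretely, I would complete $\Sigma$ (which has orthonormal columns) to a unitary matrix $U=\bbm{\Sigma&\Delta}$, where the columns of $\Delta$ form an orthonormal basis of the orthogonal complement of $\col\ps{\Sigma}$, and set $X=\bbm{I\\0}$ so that $\Sigma=UX$ and $\Sigma^*=X^*U^*$. A one-line computation, $\Sigma^*\ps{\li-A}\inv\Sigma=X^*U^*\ps{\li-A}\inv UX=X^*\ps{\li-U^*AU}\inv X$, then shows from Definition \ref{Generalized reduction definition} that $\mc{R}\ps{\lambda,\Sigma,A}=\mc{R}\ps{\lambda,X,U^*AU}$; since $X=\bbm{I\\0}$, Theorem \ref{Formula 2 for the reduction} identifies the right-hand side as the \emph{standard} reduction of $B:=U^*AU$ to its leading principal block of size equal to the number of columns of $\Sigma$.

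Next I would transport the eigenvector. Because $U$ is unitary and $Au=\lambda_0u$, the vector $v:=U^*u$ satisfies $Bv=U^*AUU^*u=\lambda_0 U^*u=\lambda_0 v$, so $v$ is an eigenvector of $B$ for $\lambda_0$; moreover its restriction to the leading block is $v_S=X^*v=X^*U^*u=\Sigma^*u$. Applying Theorem \ref{thm:eigvect} to $B$ and $v$ gives $\mc{R}\ps{\lambda_0,X,B}v_S=\lambda_0 v_S$, which upon substituting $\mc{R}\ps{\lambda_0,X,B}=\mc{R}\ps{\lambda_0,\Sigma,A}$ and $v_S=\Sigma^*u$ is exactly the claimed identity $\mc{R}\ps{\lambda_0,\Sigma,A}\Sigma^*u=\lambda_0\Sigma^*u$.

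The one genuine subtlety, which is also the main thing to handle with care, is that $\lambda_0$ is an eigenvalue of $A$, so $\ps{\li-A}\inv$ does not literally exist at $\lambda=\lambda_0$ and the identity must be read as the evaluation of the rational matrix $\mc{R}\ps{\lambda,\Sigma,A}$ at the point $\lambda_0$; a naive substitution into the defining formula is invalid for precisely this reason. The change-of-basis route sidesteps this cleanly, since the poles of $\mc{R}\ps{\lambda,X,B}$ sit at eigenvalues of the trailing block $\Delta^*A\Delta$ (Theorem \ref{Poles of the Reduction are eigenvalues of F}), and this is exactly the regime in which Theorem \ref{thm:eigvect} is stated, so invoking that result takes care of the well-definedness for us.
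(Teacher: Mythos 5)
Your proposal is correct and is essentially the paper's own argument: the paper's proof is the one-line "Writing $\Sigma = UX$ where $X^*=[I~0]$, the result follows directly from Theorem \ref{thm:eigvect} applied to $U^*AU$, similar to the above," and you have simply filled in the details (the identity $\mc{R}(\lambda,\Sigma,A)=\mc{R}(\lambda,X,U^*AU)$, the transported eigenvector $v=U^*u$ with $X^*v=\Sigma^*u$) that the paper leaves implicit. Your added remark on evaluating the rational matrix at $\lambda_0$ is a reasonable extra caution but does not change the route.
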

\begin{proof}
Writing $\Sigma = UX$ where $X^*=[I~0]$, the result follows directly from Theorem \ref{thm:eigvect} applied to $U^*AU$, similar to the above.
\end{proof}

\subsection{Divisor Matrices of Equitable Partitions}\label{sec:equit_part}

In this section, we will see that the theory of divisor matrices of equitable partitions can be viewed as a special case of the generalized isospectral reduction.  Recall that a partition $\Pi=(V_1,...,V_k)$ of the vertex set of a graph $G$ is called an \emph{equitable partition} of $G$ if, for all $i,j$ (including $i=j$), there are constants $c_{i,j}$ such that any vertex from part $V_i$ of the partition has exactly $c_{i,j}$ neighbors in part $V_j$.  Equivalently, if we look at the block partition of the adjacency matrix $A$ of $G$, each block has constant row sums.

An equivalent way of looking at equitable partitions is as follows.  Given a partition $\Pi$ with $k$ parts of $G$, let $P$ be the $n\times k$ \emph{indicator matrix} of $\Pi$---that is, the matrix whose columns are indicator vectors for the parts of the partition.  Then $\Pi$ is an equitable partition of $G$ if and only if there is some $k\times k$ matrix $d$ with $AP=Pd$.  Here $d$ is called the \emph{divisor matrix} or \emph{quotient matrix} of the equitable partition.  It is well known that eigenvalues of $d$ are eigenvalues of $A$, and corresponding eigenvectors $x$ of $d$ lift to eigenvectors $Px$ of $A$. See, for example, \cite[Chapter 9.3]{godsil2001algebraic} for a detailed development of the theory of equitable partitions. 

It is common in studying equitable partitions to replace the indicator matrix $P$ with the corresponding matrix whose columns have been normalized.  Then we still have $AP=Pd$ for some $k\times k$ matrix $d$, but now since $P^TP=I$, we can directly express $d=P^TAP$, which is a symmetric matrix.  We will refer to this as the \emph{symmetrized divisor matrix}.

\begin{example}
For instance, suppose $G$ is the following graph .
\begin{center}
    \begin{tikzpicture}
    \draw (-1,0)node[circle,draw, fill=white]{1} -- (0,1)node[circle,draw, fill=white]{2} -- (0,-1)node[circle,draw, fill=white]{3} -- (1,0)node[circle,draw, fill=white]{4} -- (0,1) (-1,0)--(0,-1);
    \end{tikzpicture}
\end{center}
It is easy to see $G$ has an equitable partition $(\{1\},\{2,3\},\{4\})$.  The indicator matrix and divisor matrix, respectively, are \[P=\bbm{1&0&0\\0&1&0\\0&1&0\\0&0&1}, d=\bbm{0&2&0\\1&1&1\\0&2&0}.\] Normalizing the columns gives normalized indicator matrix and symmetrized divosor matrix
\[
P=\bbm{1&0&0\\0&\frac{1}{\sqrt{2}}&0\\0&\frac{1}{\sqrt{2}}&0\\0&0&1}, d=\bbm{0&\sqrt{2}&0\\\sqrt{2}&1&\sqrt{2}\\0&\sqrt{2}&0}.
\]
Both versions of $d$ have eigenvalues $0, (1\pm\sqrt{17})/2$ which are also eigenvalues of $A$.
\end{example}
%Given a matrix $A$ and a matrix $P$ corresponding to a partition of the set indexing $A$ (in that the columns of $P$ are indexed by classes of the partition and the entries of a given column form the indicator vector of that column's associated class; $P$ takes a form like $\bbm{1&0&0\\0&1&0\\0&1&0\\0&0&1}$), the partition defining $P$ is said to be an \textit{equitable partition} of $A$ (and/or its associated graph) if there is a matrix $d$ such that $AP=Pd$, and $d$ is called the \textit{divisor matrix} of the equitable partition. Equivalently, a partition of $A$ is equitable if all blocks in the block matrix of $A$ (where blocks are grouped according to the classes of the partition) have constant row sum, and $d$ is given by the matrix of row sums for each block. More useful for us will be the normalized divisor matrices of an equitable partition, which have exactly the same $AP=Pd$ definition except that the columns of $P$ are normalized (in our earlier example $P$ would be replaced by $\bbm{1&0&0\\0&\frac{1}{\sqrt{2}}&0\\0&\frac{1}{\sqrt{2}}&0\\0&0&1}$). It is easy to see that normalized divisor matrices are similar to unnormalized divisor matrices, but they are more useful because $P^*=P^*$ holds for matrices $P$ with orthonormal columns, as does $P^*P=I$.

\begin{theorem}\label{thm:equit}
Let $G$ be a graph with adjacency matrix $A$, and suppose $G$ has an equitable partition with normalized indicator matrix $P$ and symmetrized divisor matrix $d$.  Then
\[
d=\mc{R}(\lambda,P,A).
\]
\end{theorem}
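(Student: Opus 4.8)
The plan is to compute $\mc{R}\ps{\lambda,P,A}$ directly from its definition and reduce everything to the single equitable-partition identity $AP=Pd$. By Definition \ref{Generalized reduction definition} with $\Sigma=P$,
\[
\mc{R}\ps{\lambda,P,A}=\li-\ps{P^*\ps{\li-A}\inv P}\inv,
\]
and since the normalized indicator matrix has orthonormal columns we have $P^*P=I$ and $d=P^*AP$. Thus it suffices to establish the single identity $P^*\ps{\li-A}\inv P=\ps{\li-d}\inv$ as an equality of matrices with entries in $\W$.

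To get this, I would start from $AP=Pd$ and compute
\[
\ps{\li-A}P=\lambda P-AP=\lambda P-Pd=P\ps{\li-d}.
\]
Both $\li-A$ and $\li-d$ are invertible over $\W$ (their inverses have poles only at the finitely many eigenvalues of $A$ and $d$ respectively), so multiplying the displayed equation on the left by $\ps{\li-A}\inv$ and on the right by $\ps{\li-d}\inv$ gives $\ps{\li-A}\inv P=P\ps{\li-d}\inv$. Left-multiplying by $P^*$ and using $P^*P=I$ then yields $P^*\ps{\li-A}\inv P=\ps{\li-d}\inv$, exactly the identity needed. Substituting back,
\[
\mc{R}\ps{\lambda,P,A}=\li-\ps{\ps{\li-d}\inv}\inv=\li-\ps{\li-d}=d.
\]

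I do not expect a genuine obstacle here; the only points requiring care are the invertibility bookkeeping over $\W$ and the (perhaps surprising) observation that although the right-hand side of the definition is built from the $\lambda$-dependent resolvent $\ps{\li-A}\inv$, all $\lambda$-dependence cancels and the generalized reduction is the constant matrix $d$. This is precisely the analog for equitable partitions of Theorem \ref{Constant part of the reduction is M}: equitability forces the entire reduction, not merely its constant part, to be independent of $\lambda$. Alternatively, one could set $U=\bbm{P&\Delta}$ with $\Delta$ an orthonormal basis of the orthogonal complement of $\col\ps{P}$ and reduce to the standard reduction of $U^*AU$ as in the earlier proofs, but the direct route through $AP=Pd$ is the shortest.
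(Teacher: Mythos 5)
Your proof is correct and follows essentially the same route as the paper's: starting from $AP=Pd$, deriving $\ps{\li-A}\inv P=P\ps{\li-d}\inv$, left-multiplying by $P^*$ and using $P^*P=I$ to obtain $P^*\ps{\li-A}\inv P=\ps{\li-d}\inv$, and substituting into the definition of the generalized reduction. The extra remarks on invertibility over $\W$ and the alternative via $U=\bbm{P&\Delta}$ are fine but not needed.
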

\begin{proof}
Starting from the equation $AP=Pd$, we calculate
\begin{align*}
AP=&Pd\\\lambda P-AP=&\lambda P-Pd\\\ps{\li-A}P=&P\ps{\li-d}\\P\ps{\li-d}\inv=&\ps{\li-A}\inv P\\P^TP\ps{\li-d}\inv=&P^T\ps{\li-A}\inv P\\\ps{\li-d}\inv=&P^T\ps{\li-A}\inv P\\d=&\li-\ps{P^T\ps{\li-A}\inv P}\inv\\d=&\mc{R}\ps{\lambda,P,A}.
\end{align*}
\end{proof}
Thus the normalized divisor matrices of equitably partitioned matrices are exactly the same as the generalized reductions of those matrices with respect to the normalized partition matrices of those equitable partitions. %Equitable partitions are another active topic of research, and their close ties to generalized reductions provide more impetus for the study of generalized reductions. 
Note that it is remarkable that in the case of the normalized partition matrix $P$, when there is an equitable partition, this generalized reduction works out in such a way that all dependence on the variable $\lambda$ cancels and the reduction leaves us with a matrix with coefficients from the base field.  This ultimately stems from the equation $AP=Pd$, and indeed, whenever $\Sigma$ has a column space that is invariant under the action of $A$, we will see a similar cancellation of the dependence on $\lambda$ in the generalized isospectral reduction. Indeed, if we took $\Sigma$ to consist of orthonormal eigenvectors of $A$, then the generalized reduction using $\Sigma$ would simply yield a diagonal matrix with the corresponding eigenvalues on the diagonal.  Thus the theory of diagonalization of a Hermitian matrix can be viewed a specific case of generalized isospectral reductions as well.

\section{Unfolding Isospectral Reductions}\label{sec:unfolding}

This section will be dedicated to investigating the problem of constructing a matrix with entries in $\R$ or $\C$ that has a given isospectral reduction.  We will refer to this process as ``unfolding" the isospectral reduction.

\begin{definition}
A matrix $R\in \W^{s\times s}$ is \emph{unfoldable} if there exists some matrix $A\in \C^{n\times n}$ (for some $n>s$) and some subset $S$ of the index set of $A$ with $|S|=s$ such that \[\mc{R}(\lambda,S,A)=R.\]  We will call such a matrix $A$ an \emph{unfolding} of $R$.
\end{definition}

% \begin{theorem}\label{Unfolding with Zeros}
% Unfolding a square block matrix (not necessarily with matching row and column partitions) with a single square nonzero block is possible when that nonzero block is unfoldable.
% \end{theorem}
% \begin{proof}
% This is a straightforward corollary of lemma \ref{Folding with zeros}. If $\mcdf$ is an unfolding of the reduced matrix $R$, then $\bbm{0&\cdots&0&\cdots\\\vdots&M&\vdots&C\\0&\cdots&0&\cdots\\\vdots&D&\vdots&F}$ is an unfolding of $\bbm{0&\cdots&0\\\vdots&R&\vdots\\0&\cdots&0}$ (the zero matrices not necessarily being square).
% \end{proof}

\begin{theorem}\label{Edge Splitting}
If the (finitely many) matrices $R_1,R_2,\dots,R_k$ are unfoldable, then their sum is also unfoldable.
\end{theorem}
\begin{proof}
% The case for a sum of two unfoldable matrices is a straightforward corollary of lemma \ref{Edge pre-splitting}, similar to the theorem above; the unfolding is provided by $\bbm{M_1+M_2&C_1&C_2\\D_1&F_1&0\\D_2&0&F_2}$. A sum of $n$ unfoldable matrices equals one unfoldable matrix plus a sum of $n-1$ unfoldable matrices, and will thus be unfoldable if the sum of $n-1$ matrices is unfoldable. It follows that if $\bbm{M_1&C_1\\D_1&F_1},\bbm{M_2&C_2\\D_2&F_2},\dots$ are unfoldings of $R_1,R_2,\dots$, then $\bbm{M_1+M_2+\dots&C_1&C_2&\cdots\\D_1&F_1&0&\cdots\\D_2&0&F_2&\cdots\\\vdots&\vdots&\vdots&\ddots}$ is an unfolding of $R_1+R_2+\dots$ by induction.
This is a straightforward corollary of Corollary \ref{Edge pre-splitting}. The $M$ blocks of the unfoldings are added together, the $C$ blocks are added to the right, the $D$ blocks are all present going downwards, and the $F$ blocks grow diagonally downwards.
\end{proof}

\begin{lemma}\label{basic unfoldings}
Any matrix of the form $R\ps{\lambda}=\frac{1}{\ps{\lambda-\nu}^n}K$, where $K\in \C^{s\times s}$ and $n\in\N$, is unfoldable. Furthermore, in the case where $n=1$, $\nu$ is real, and $K$ is positive semidefinite, $R\ps{\lambda}$ has a Hermitian unfolding, and these conditions are necessary for such an $R$ to have a Hermitian unfolding.
\end{lemma}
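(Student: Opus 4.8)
The plan is to treat the three assertions—general unfoldability, existence of a Hermitian unfolding, and necessity of the stated conditions—separately, reserving the real work for necessity. For all constructions I would take $M=0$, which is forced by Theorem \ref{Constant part of the reduction is M} since $R(\lambda)\to 0$ as $\lambda\to\infty$, so the entire function must be produced by the $C(\lambda I-F)^{-1}D$ term.

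For general unfoldability I would realize the scalar factor $\frac{1}{(\lambda-\nu)^n}$ with a Jordan block and tensor in $K$. Concretely, let $F$ be the $ns\times ns$ matrix partitioned into $n\times n$ blocks of size $s$, equal to $\nu I+N$, where $N$ is the block shift carrying $I_s$ on the superdiagonal; let $C$ have $K$ in its first block and zeros elsewhere, and let $D$ have $I_s$ in its last block and zeros elsewhere. Then $C(\lambda I-F)^{-1}D$ extracts $K$ times the $(1,n)$ block of $(\lambda I-F)^{-1}=\sum_{k=0}^{n-1}(\lambda-\nu)^{-(k+1)}N^k$, and since the $(1,n)$ block of $N^{n-1}$ is $I_s$, this block is $(\lambda-\nu)^{-n}I_s$. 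Hence the reduction of $A=\mcdf$ equals $\frac{1}{(\lambda-\nu)^n}K$ exactly. For the Hermitian existence claim with $n=1$, $\nu\in\R$, and $K\succeq 0$, I would use the Hermitian positive semidefinite square root $K^{1/2}$ and set $A=\bbm{0 & K^{1/2}\\ K^{1/2} & \nu I}$; this is Hermitian because $\nu$ is real and $K^{1/2}=(K^{1/2})^*$, and its reduction is $K^{1/2}(\lambda I-\nu I)^{-1}K^{1/2}=\frac{1}{\lambda-\nu}K$. Both of these are routine verifications.

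The substantive direction is necessity: assuming $R(\lambda)=\frac{1}{(\lambda-\nu)^n}K$ with $K\neq 0$ admits a Hermitian unfolding $A=\mcdf$, I must deduce $n=1$, $\nu\in\R$, and $K\succeq 0$. Since $F$ is a principal submatrix of a Hermitian matrix it is Hermitian, hence has a spectral decomposition $F=\sum_\mu \mu E_\mu$ with real eigenvalues $\mu$ and orthogonal projections $E_\mu$. Then $(\lambda I-F)^{-1}=\sum_\mu \frac{1}{\lambda-\mu}E_\mu$, so the reduction $M+\sum_\mu \frac{1}{\lambda-\mu}CE_\mu D$ has only simple poles; matching against $\frac{1}{(\lambda-\nu)^n}K$, whose pole at $\nu$ has order $n$, forces $n=1$. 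Comparing the two expressions (equivalently, invoking Theorem \ref{Poles of the Reduction are eigenvalues of F} and Theorem \ref{Residues of the Reduction}) then shows $\nu$ is one of the real eigenvalues $\mu$ and that the residue satisfies $K=CE_\nu D$. Finally, Hermiticity of $A$ gives $D=C^*$, and since $E_\nu=E_\nu^*=E_\nu^2$ we obtain $K=CE_\nu C^*=(CE_\nu)(CE_\nu)^*\succeq 0$. I expect the residue/spectral step to be the main point of the argument, since it is exactly what converts the analytic pole-and-residue data of the reduction into the algebraic facts that $F$ has only simple poles (forcing $n=1$) and that $K$ is positive semidefinite; the degenerate case $K=0$ I would dispose of separately, as it is unfoldable by the zero matrix and trivially satisfies the stated conditions.
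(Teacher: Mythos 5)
Your proof is correct, and the necessity argument (spectral decomposition of the Hermitian $F$, simple real poles, residues $CE_\nu C^* \succeq 0$, with $K=0$ set aside as trivial) together with the Hermitian sufficiency construction is essentially identical to the paper's, modulo your cosmetic choice of $K^{1/2}$ in place of an arbitrary factorization $K=XX^*$. Where you genuinely diverge is the general unfoldability claim: the paper proceeds by induction on $n$, factoring $K=XY$, taking an unfolding $\mcdf$ of $\frac{1}{\ps{\lambda-\nu}^n}X$ guaranteed by the inductive hypothesis, and checking that $\bbm{0&0&C\\Y&\nu I&0\\0&D&F}$ reduces to $\frac{1}{\ps{\lambda-\nu}^{n+1}}K$; you instead give a single closed-form unfolding with $F=\nu I+N$ a block Jordan matrix, reading off $\ps{\li-F}\inv=\sum_{k=0}^{n-1}\ps{\lambda-\nu}^{-(k+1)}N^k$ and extracting the $(1,n)$ block. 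Your computation checks out (the $(1,n)$ block of the resolvent is exactly $\ps{\lambda-\nu}^{-n}I_s$, so $C\ps{\li-F}\inv D=\ps{\lambda-\nu}^{-n}K$), and it is arguably cleaner and more explicit than the induction; the paper's recursive factorization has the minor advantage that using rank decompositions at each stage can shrink the intermediate blocks when $K$ has low rank, but nothing later in the paper depends on that. One small remark: your appeal to Theorem \ref{Constant part of the reduction is M} to ``force'' $M=0$ is only needed in the necessity direction (where the paper uses the same observation); in the constructive direction you are simply free to choose $M=0$.
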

\begin{proof}
For the non-Hermitian case we work by induction with the base case $n=1$%, which is also the case of interest for Hermitian unfoldings
. %In the non-Hermitian case 
choose any matrices $X$ and $Y$ such that $XY=K$ (such as the rank decomposition of $K$). Then the matrix $\bbm{0&X\\Y&\nu I}$ has the reduction (to its first block) $0+X\ps{\li-\nu I}\inv Y=\ps{\lambda-\nu}\inv XY=R\ps{\lambda}$, so unfoldings exist in all cases for $n=1$ by construction. 

Note that for $K$ positive semidefinite and $\nu$ real we could choose $X,Y$ such that $Y=X^*$, and in this case $\bbm{0&X\\Y&\nu I}$ becomes a Hermitian unfolding of $R$, proving the sufficiency part of our result for Hermitian matrices.

Continuing with the induction, assume that unfoldings always exist for matrices of the specified form for a given $n\geq1$. Consider an arbitrary matrix $R\ps{\lambda}=\frac{1}{\ps{\lambda-\nu}^{n+1}}K$ of the ``next" form. Once again, write $K=XY$. %Because sums of unfoldable matrices are always unfoldable, we may reduce the problem of unfolding $R$ to the case where $K$ has rank $1$ by rank-decomposing it. Write $K=y^*x$ for appropriately sized vectors $x,y$. 
%The matrix $\bbm{0&\frac{1}{\ps{\lambda-\nu}^n}X\\Y&\nu I}$ folds to its first block as $0+\frac{1}{\ps{\lambda-\nu}^n}X\ps{\lambda-\nu}\inv IY=R\ps{\lambda}$. 
By our inductive assumption, $\frac{1}{\ps{\lambda-\nu}^n}X$ has some unfolding- call it $\mcdf$. Since $\frac{1}{\ps{\lambda-\nu}^n}X$ approaches $0$ as $\lambda\xrightarrow[]{}\infty$, we have from theorem \ref{Constant part of the reduction is M} that $M=0$.
Finally, consider the matrix $\bbm{0&0&C\\Y&\nu I&0\\0&D&F}$. Reducing this matrix to its first block yields
\begin{align*}
&0+\bbm{0&C}\ps{\bbm{\li&0\\0&\li}-\bbm{\nu I&0\\D&F}}\inv\bbm{Y\\0}\\
=&\bbm{0&C}\ps{\bbm{\ps{\lambda-\nu}I&0\\-D&\li-F}}\inv\bbm{Y\\0}\\
=&\bbm{0&C}\bbm{\ps{\lambda-\nu}\inv I&0\\\ps{\li-F}\inv D\ps{\lambda-\nu}\inv&\ps{\li-F}\inv}\bbm{Y\\0}\\
% =&\bbm{0&C}\bbm{\ps{\lambda-\nu}\inv Y\\\ps{\li-F}\inv D\ps{\lambda-\nu}\inv Y}\\
=&C\ps{\li-F}\inv D\ps{\lambda-\nu}\inv Y\\
=&\ps{M+C\ps{\li-F}\inv D}\ps{\lambda-\nu}\inv Y\\
=&\frac{1}{\ps{\lambda-\nu}^n}X\ps{\lambda-\nu}\inv Y\quad\text{By construction of }M,C,D,F\\
=&\frac{1}{\ps{\lambda-\nu}^n}\ps{\lambda-\nu}\inv XY\\
=&\frac{1}{\ps{\lambda-\nu}^{n+1}}K
\end{align*}
so $\frac{1}{\ps{\lambda-\nu}^{n+1}}K$ indeed has an unfolding. Since $K$ and $\nu$ were arbitrary, the existence of arbitrary unfoldings for matrices of the form $\frac{1}{\ps{\lambda-\nu}^n}K$ implies the same for matrices of the form $\frac{1}{\ps{\lambda-\nu}^{n+1}}K$, and by induction on the base case $n=1$ we have the first part of our theorem. 
%Note that induction on our unfolding construction produces matrices of the form...

For necessity of the Hermitian unfoldability conditions, consider the Hermitian matrix $\mcdf$ and its reduction $\rdef$ to its first block. Now $F$ must be Hermitian, so it has the eigendecomposition $F=\sum\nu_i\proj_{V_i}$ expressing it as a real linear combination of projections onto its eigenspaces (the sum is taken over eigenvalue-eigenspace pairs of $F$). Then $\li-F=\sum\ps{\lambda-\nu_i}\proj_{V_i}$ and $\lifi=\sum\ps{\lambda-\nu_i}\inv\proj_{V_i}$. Finally, the reduction of $\mcdf$ to its first block is $M+\sum\ps{\lambda-\nu_i}\inv C\proj_{V_i}C^*$. Write $C\proj_{V_i}C^*=K_i$ and cancel off zero terms to get $R\ps{\lambda}=M+\sum_{i\in\mc{I}}\ps{\lambda-\nu_i}\inv K_i$, where $\mc{I}$ is some indexing set. Since the nonzero terms are canceled and each $\nu_i$ is distinct, $R\ps{\lambda}$ has exactly $\vs{\mc{I}}$ poles. There must only be a single term left\footnote{Or none, in which case $R$ is the constant zero matrix, trivially has itself as a Hermitian unfolding, and trivially satisfies the conditions in the conclusion with $K=0$, $\nu=0$, $n=1$} (and $M$ must equal $0$) if $R\ps{\lambda}$ is to take the form $\frac{1}{\ps{\lambda-\nu}^n}K$, since the latter only ever has one pole (and has zero constant part). Hence any $R\ps{\lambda}$ arising as the reduction of a Hermitian matrix and having the desired form takes the form $\frac{1}{\lambda-\nu_i}K_i$ for some real $\nu_i$ (an eigenvalue of the Hermitian $F$-block of its Hermitian unfolding). Furthermore, since $K_i=C\proj_{V_i}C^*$ is star-congruent to a positive semidefinite projection matrix, it is itself positive semidefinite. Then the conditions $K$ is positive semidefinite, $\nu$ is real, $n=1$ are both necessary and sufficient for $\frac{1}{\ps{\lambda-\nu}^n}K$ to have a Hermitian unfolding, as desired.
\end{proof}

\begin{theorem}\label{Unfolding Existence and Procedure}
Every matrix-valued function $R\ps{\lambda}$ of $\lambda$ having as its entries rational functions in $\lambda$ with numerator degrees not exceeding their respective denominators' degrees has an unfolding. Furthermore, such a matrix has a Hermitian unfolding iff all of its (entries') poles are real and simple, the residue matrix at each of those those poles (calculated such that the $i,j$ entry of the $\nu$-residue matrix is the residue of the function $R_{i,j}\ps{\lambda}$ at $\nu$) is positive semidefinite, and $\lim_{\lambda\xrightarrow[]{}\infty}R\ps{\lambda}$ is Hermitian.
\end{theorem}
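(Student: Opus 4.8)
The plan is to prove the two halves separately, building entirely on the machinery already assembled. For the unconditional existence of an unfolding, the idea is to reduce an arbitrary $R(\lambda)$ to the basic pieces handled by Lemma \ref{basic unfoldings} via a matrix-level partial fraction decomposition. Since every entry of $R(\lambda)$ is a rational function whose numerator degree does not exceed its denominator degree, each entry admits a partial fraction expansion consisting of a constant term plus a finite sum of terms of the form $c/(\lambda-\nu)^n$, with no polynomial part of positive degree. Collecting these coefficients entrywise into matrices, I would write $R(\lambda) = M_0 + \sum_{\nu,n} \frac{1}{(\lambda-\nu)^n} K_{\nu,n}$, a finite sum in which $M_0$ is the (constant) limit at infinity and each $K_{\nu,n} \in \C^{s\times s}$. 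Each summand $\frac{1}{(\lambda-\nu)^n}K_{\nu,n}$ is unfoldable by Lemma \ref{basic unfoldings}, and the constant matrix $M_0$ is trivially unfoldable (take $C=D=0$ and any nontrivial $F$ block to satisfy $n>s$). Theorem \ref{Edge Splitting} then assembles these finitely many unfoldings into a single unfolding of $R(\lambda)$.

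For necessity of the Hermitian conditions, suppose $R = \red$ for a Hermitian $A = \mcdf$. Hermiticity forces $M$ and $F$ to be Hermitian and $D = C^*$. Because $F$ is Hermitian it is normal with real spectrum, so Theorem \ref{Poles of the Reduction are eigenvalues of F} gives that all poles are real, while the resolvent expansion $\lifi = \sum_i (\lambda-\nu_i)^{-1} E_i$ over the \emph{distinct} eigenvalues shows every pole is simple. By Theorem \ref{Residues of the Reduction} the residue at a pole $\mu$ is $C E_\mu D = C E_\mu C^* = (C E_\mu)(C E_\mu)^*$, which is positive semidefinite since $E_\mu$ is an orthogonal projection. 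Finally, Theorem \ref{Constant part of the reduction is M} identifies $\lim_{\lambda\to\infty} R(\lambda) = M$, which is Hermitian. This is essentially the decomposition already extracted in the necessity argument of Lemma \ref{basic unfoldings}, now read off term by term.

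For sufficiency, assume the three conditions hold. Since all poles are real and simple, the matrix-level partial fraction decomposition collapses to $R(\lambda) = M_0 + \sum_\nu \frac{1}{\lambda-\nu} K_\nu$, where $M_0 = \lim_{\lambda\to\infty}R(\lambda)$ is Hermitian and each residue matrix $K_\nu$ is positive semidefinite with $\nu$ real. The Hermitian half of Lemma \ref{basic unfoldings} then furnishes each $\frac{1}{\lambda-\nu}K_\nu$ with a Hermitian unfolding $\bbm{0&X\\X^*&\nu I}$, and the Hermitian constant matrix $M_0$ is its own (padded) Hermitian unfolding. It remains to combine these, which I would do through the explicit block construction of Theorem \ref{Edge Splitting} (i.e. Corollary \ref{Edge pre-splitting}): the $M$ blocks add, the $C_i$ stack to the right, the $D_i$ stack below, and the $F_i$ grow down the diagonal.

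The main obstacle is this last verification: confirming that the combining construction preserves Hermiticity. One must check that if each building block $\bbm{M_i&C_i\\D_i&F_i}$ is Hermitian (so $M_i = M_i^*$, $D_i = C_i^*$, $F_i = F_i^*$), then the assembled matrix is Hermitian---its $M$ block $\sum_i M_i$ is Hermitian, its diagonal $F_i$ blocks are Hermitian with vanishing off-diagonal $F$-coupling, and each top-right block $C_i$ is matched by the bottom-left block $D_i = C_i^*$. Granting this, Theorem \ref{Edge Splitting} yields a Hermitian unfolding of $R(\lambda)$, completing the equivalence. The only other point requiring care is ensuring the degree hypothesis is invoked exactly where it is needed---to rule out a polynomial (non-proper) part in the partial fraction expansion, so that $M_0$ really is a finite constant matrix and the sum over poles is finite.
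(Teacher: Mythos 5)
Your proposal is correct and follows essentially the same route as the paper: an entrywise partial fraction decomposition into a constant part plus terms $\frac{1}{\ps{\lambda-\nu}^k}K$, each handled by Lemma \ref{basic unfoldings}, assembled via the block construction of Theorem \ref{Edge Splitting}, with necessity of the Hermitian conditions read off from the eigendecomposition of the Hermitian $F$ block exactly as in the paper's argument. Your explicit padding of the constant block and your direct citations of Theorems \ref{Poles of the Reduction are eigenvalues of F}, \ref{Residues of the Reduction}, and \ref{Constant part of the reduction is M} are minor presentational refinements of the same proof.
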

\begin{proof}
Let $\mc{I}$ index the poles of $R\ps{\lambda}$ (a number is a pole of $R\ps{\lambda}$ if it is the pole of some entry of $R\ps{\lambda}$), label the associated poles $\nu_i$, and label their orders $n_i$. Then we can take a partial fraction decomposition (entrywise) of $R\ps{\lambda}$ to get $R\ps{\lambda}=M+\sum_{i\in\mc{I}}\sum_{k=1}^{n_i}\frac{1}{\ps{\lambda-\nu_i}^k}K_{i,k}$, where there are no positive powers of $\lambda$ in the sum because of the degree conditions. $M$ is unfoldable because it is a constant matrix, and hence reduces to itself. Every other term in the sum is unfoldable by Lemma \ref{basic unfoldings}. By Theorem \ref{Edge Splitting} we have the result. Furthermore, both theorems give algorithms for constructing their associated unfoldings, so combining them with the partial fraction decomposition in this way gives us a general unfolding procedure. First partial-fraction-decompose, then find $M$ as the constant term, then find unfoldings for each term by rank-decomposing the coefficient matrices, then combine all the unfoldings by appending $C$ blocks to the right of $M$, $F$ blocks on the diagonal, and $D$ blocks downwards from $M$.

The case for Hermitian unfoldings is similar. Note that if the unfoldings $\bbm{M_1&C_1\\D_1&F_1}$ and $\bbm{M_2&C_2\\D_2&F_2}$ are Hermitian then $\bbm{M_1+M_2&C_1&C_2\\D_1&F_1&0\\D_2&0&F_2}$ will also be; this enables us to still use Theorem \ref{Edge Splitting} to construct the unfoldings. If all of our stated conditions are met then we can use Lemma \ref{basic unfoldings} to unfold each term in a Hermitian way (the trivial unfolding of the $M$ block will clearly be Hermitian as well by the last condition); it remains to prove necessity.

The condition that $\lim_{\lambda\xrightarrow[]{}\infty}R\ps{\lambda}$ be Hermitian is clearly necessary, since this limit is equal to the $M$ block of any unfolding by Theorem \ref{Constant part of the reduction is M}, and is hence a principal submatrix of every unfolding. Principal submatrices of Hermitian matricies must always be Hermitian. For the necessity of the other conditions we refer back to the proof of Lemma \ref{basic unfoldings}, where we showed that the reduction of a Hermitian matrix takes the form $M+\sum_{i\in\mc{I}}\frac{1}{\lambda-\nu_i}K_i$ where each $\nu_i$ is the eigenvalue of a Hermitian matrix (and hence real) and each $K_i$ is star-congruent to an orthogonal projection matrix (and hence positive semidefinite). Anything of this form clearly has simple real poles and positive semidefinite residues at those poles, so the remaining conditions are also necessary as well as sufficient.
\end{proof}The unfolding procedure can now produce general weighted digraphs from general $\mathbb{W}$-matrices and graphs with Hermitian adjacency matrices from $\mathbb{W}$-matrices satisfying a few additional conditions. It is natural to look for ways to continue specializing the procedure and eventually to seek out an unfolding procedure for unweighted symmetric zero-one graphs without loops. Let us start by removing loops from the graph produced by the unfolding procedure.

\begin{theorem}\label{Loopless unfolding}
Any reduction $R$ with a Hermitian unfolding such that $\lim_{\lambda\ar\infty}R\ps{\lambda}$ has only zeros on its diagonal (is ``hollow") has in particular a hollow Hermitian unfolding. 
\end{theorem}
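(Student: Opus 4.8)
The plan is to start from the Hermitian unfolding guaranteed by hypothesis and surgically remove the loops, which can only live in the $F$-block. Write the given Hermitian unfolding as $A=\mcdf$ with $D=C^*$ and $F$ Hermitian. Its diagonal splits into the diagonal of $M$ and the diagonal of $F$. By Theorem \ref{Constant part of the reduction is M}, $M=\lim_{\lambda\ar\infty}R\ps{\lambda}$, which is hollow by hypothesis, so all the loops of $A$ sit on the diagonal of $F$. The entire task is therefore to produce, without changing the reduction, a Hermitian unfolding whose $F$-block has zero diagonal. Note that we cannot simply demand $F=0$: by Theorem \ref{Poles of the Reduction are eigenvalues of F} the poles of $R$ must be eigenvalues of $F$, so $F$ is in general forced to be nonzero.

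First I would record the one move that is safe. If $U$ is unitary, then conjugating by $\bbm{I&0\\0&U}$ replaces $F$ by $U^*FU$ and $C$ by $CU$ while leaving $M$ fixed; by Theorem \ref{Correctness-preserving similarities} the reduction is unchanged, and since the conjugating matrix is unitary the result is still Hermitian. Thus I am free to replace $F$ by any Hermitian matrix unitarily similar to it. The key linear-algebra fact is that a Hermitian matrix is unitarily similar to a hollow (zero-diagonal) Hermitian matrix if and only if its trace is zero: the ``only if'' is immediate since the trace is conjugation-invariant, and the ``if'' follows from a standard Givens-rotation argument---whenever the trace vanishes but some diagonal entry is nonzero there must be a positive and a negative diagonal entry, and a rotation in that coordinate plane zeroes one of them while preserving the trace, strictly decreasing the number of nonzero diagonal entries. (This is the trace-zero case of the Schur--Horn theorem.)

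The remaining obstacle is that $\tr\ps{F}$ need not vanish---for instance the unfolding $\bbm{0&1\\1&\nu}$ of $\frac{1}{\lambda-\nu}$ has $F=\bs{\nu}$. I would remove this obstruction by padding the unfolding with a single decoupled vertex carrying the real loop weight $-\tr\ps{F}$, i.e. by passing to
\[
A'=\bbm{M&C&0\\C^*&F&0\\0&0&-\tr\ps{F}}.
\]
This is still Hermitian, its $M$-block (hence its limit at infinity) is unchanged and still hollow, and by Corollary \ref{Edge pre-splitting} the appended block contributes $0$ to the reduction, so $\mc{R}\ps{\lambda,S,A'}=R$. The new $F$-block $F\oplus\bs{-\tr\ps{F}}$ now has trace $0$. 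Applying the conjugation of the previous paragraph to $A'$ then produces a Hermitian unfolding of $R$ whose $F$-block is hollow; combined with the hollow $M$-block, the whole matrix is hollow, as required.

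I expect the main obstacle to be exactly this trace constraint: unlike the earlier unfolding results, one cannot unfold loop-freely term by term, because a single real pole $\nu$ forces $\nu$ into the spectrum of $F$, yet a hollow Hermitian $F$ must have zero trace. The resolution is the twofold observation that the trace is the \emph{only} obstruction to hollowing $F$ (via Schur--Horn) and that this trace can always be corrected ``for free'' by an isolated eigenvalue contributing nothing to the reduction.
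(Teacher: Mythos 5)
Your proposal is correct and follows essentially the same route as the paper: append a decoupled loop of weight $-\tr\ps{F}$ to make the $F$-block traceless, then use the fact that a traceless Hermitian matrix is unitarily similar to a hollow one, and transport that unitary back via the correctness-preserving similarity of Theorem \ref{Correctness-preserving similarities} (noting that $M$ is already hollow by Theorem \ref{Constant part of the reduction is M}). The only divergence is in the proof of the hollowing lemma, where you use successive plane rotations that zero out one diagonal entry at a time, while the paper finds an isotropic unit vector via the intermediate value theorem on the unit sphere and inducts on the dimension --- both are standard and valid.
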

\begin{proof}
Let $A=\mcdf$ be the unfolding of $R$ obtained by the Hermitian unfolding procedure above. The condition that $\lim_{\lambda\ar\infty}R\ps{\lambda}$ is hollow is exactly the condition that $M$ is hollow (so said condition is necessary for any matrix to have a hollow unfolding). Recall \ref{Correctness-preserving similarities}. The strategy of our proof will be to construct a correctness-preserving similarity sending a general Hermitian unfolding of $R$ to a hollow one. First we have to fix $A$'s trace. Note that $\bbm{M&C&0\\C^*&F&0\\0&0&-\tr\ps{F}}$ is also a Hermitian unfolding of $R$. We seek to show that the Hermitian matrix $\bbm{F&0\\0&-\tr\ps{F}}$ is unitarily similar to a hollow matrix, say by $U^*\bbm{F&0\\0&-\tr\ps{F}}U=F'$. Then by correctness-preserving similarities \ref{Correctness-preserving similarities} we will have that $\bbm{M&\bbm{C&0}U\\U^*\bbm{C\\0}&F'}$ is also a Hermitian unfolding for $R$, and in particular a hollow one. We only need the following lemma:
\begin{lemma}
Any Hermitian matrix with trace zero is unitarily similar to a hollow matrix.
\end{lemma}
\begin{proof}
We proceed by induction on the size of the matrix. Clearly the result holds for one-by-one matrices, as we only need to check that $\bbm{0}$ is hollow. Suppose that it holds for $n\times n$ matrices. Then let a general $n+1\times n+1$ trace-zero Hermitian matrix $A$ be given. Because $A$ has zero trace, it either has all zero eigenvalues (and then is the zero matrix by Hermiticity and satisfies our desired conclusion without the application of any unitary similarities) or it has some positive and some negative eigenvalues with corresponding eigenvectors. Consider the function sending an $n+1$-dimensional vector $x$ to $x^*Ax$. This function is obviously continuously differentiable on the $n+1$-dimensional unit sphere. The unit sphere is also simply connected and contains vectors $x_+,x_-$ for which the function is positive and negative, respectively (take them to be unit eigenvectors of $A$ corresponding to positive and negative eigenvalues). Then by the intermediate value theorem there is some unit vector $x_0$ for which $x_0^*Ax_0=0$. Pick an orthonormal basis for the orthogonal complement of $x_0$'s span and make the vectors comprising this basis the columns of a new matrix $V$. Now $U=\bbm{x_0&V}$ is unitary because it has orthonormal columns, and $U^*AU=\bbm{x_0^*Ax_0&x_0^*AV\\V^*Ax_0&V^*AV}=\bbm{0&x_0^*AV\\V^*Ax_0&V^*AV}$. Because unitary similarities preserve the trace and Hermiticity, $V^*AV$ is a trace-zero $n\times n$ Hermitian matrix. By our inductive hypothesis there is a unitary matrix $W$ such that $W^*V^*AVW$ is hollow. Let $X=U\bbm{1&0\\0&W}$. Then $X^*AX=\bbm{0&x_0^*AVW\\W^*V^*Ax_0&W^*V^*AVW}$ is a hollow Hermitian matrix which is unitarily similar to $A$. By induction our lemma is proved.
\end{proof}
Using the lemma we construct a unitary matrix sending $\bbm{F&0\\0&-\tr\ps{F}}$, turn that into a correctness-preserving unitary similarity, and use that similarity to give us a new Hollow Hermitian unfolding for $R$ as discussed above.
\end{proof}

The reduction tells us the restrictions of all powers of $A$ to the $M$ block, and hence the eigenvectors of $A$ as projected to the $S$-subspace. If all of these vectors can be chosen to be purely real then the Hermitian unfolding procedure can be chosen as a real-symmetric unfolding procedure and the hollowing procedure above can be performed to give a real-weighted symmetric loopless unfolding procedure. There is great difficulty in general in making the entries of such graphs integers by means of correctness-preserving transformations. If that can be accomplished the result can often be treated as a residue matrix to give an unfolding with entries in $\cs{-1,0,1}$. If they can be made positive integers it is possible to obtain a simple graph unfolding.%\fixme{make more rigorous}

\section{Quantum Walks}\label{sec:quantum_walk}

As mentioned in the introduction, a quantum walk on a graph $G$ is described by the unitary transition matrix
\[
U(t)=e^{-itA}
\]
where $A$ is the adjacency matrix of $G$. We say there is perfect state transfer (PST) from $u$ to $v$ at time $\tau$ if \[U(\tau) e_u = \gamma e_v\] for some unit complex number $\gamma$.  Since $U(t)$ is a unitary matrix, PST at time $\tau$ is equivalent to saying that $U(\tau)$ is block diagonal, with a block indexed by $u$ and $v$ equal to $\bbm{0&\gamma\\\gamma&0}$.

A generalization of PST called \emph{fractional revival} is defined as follows.  We say fractional revival (FR) occurs on a subset $S$ of the vertices of $G$ at time $\tau$ if, for any $\phi$ supported only on $S$, $U(\tau)\phi$ is also supported only on $S$.  Fractional revival at time $\tau$ is equivalent to the existence of an $|S|\times |S|$ unitary matrix $H$ such that \[U(\tau)=\bbm{H&0\\0&U'}\] for some $U'$.  We call this fractional revival on $S$ with respect to $H$.  See \cite{chan2022fundamentals,chan2019quantum} for more details.  Note that PST from $u$ to $v$ is FR with respect to the matrix $\bbm{0&\gamma\\\gamma&0}$ on the set $\{u,v\}$.

Both PST and FR have approximations, called, respectively, pretty good state transfer (PGST) and pretty good fractional revival (PGFR).  PGFR amounts to saying that for every $\epsilon>0$ there is a time $\tau$ for which $U(\tau)$ has a block decomposition $\bbm{H&M\\M^*&U'}$ with $||M||<\epsilon$, and PGST is a similar approximation to PST.  See \cite{chan2021pretty,godsil2012number} for more details.  

The goal of this section will be to study the quantum walk matrix, and hence each of these quantum communication phenomena, by way of the isospectral reduction.  We will use some well-known tools related to the Laplace transform of a function.  Recall that the Laplace transform $\mathscr{L}_{t\ar s}$ is a linear operator from functions in $t$ to functions in $s$ defined by \[\mathscr{L}_{t\ar s}f\ps{t}=\int_{0}^{\infty}f\ps{t}e^{-st}dt,\] and $\mathscr{L}_{t\ar s}\inv$ denotes its inverse. The Laplace transform is defined to act element-wise on matrix- or vector-valued functions (and hence commutes with all left- and right- matrix actions on matrix-valued functions by linearity). We will in particular use the fact that \[\mathscr{L}_{t\ar s}e^{At}=\ps{sI-A}\inv\] for any matrix $A$.  See any standard text (e.g. \cite{doetsch2012introduction}) for details on the Laplace transform.

Our main result of this section establishes an equivalence between the information provided by the restricted quantum walk matrix $\Sigma^*U\ps{t,A}\Sigma$ and the corresponding generalized isospectral reduction $\mc{R}\ps{\lambda,\Sigma,A}$.

\begin{theorem}\label{Quantum walk equivalence to reduction}
Let $U\ps{t,A}:=e^{-itA}$ be the quantum walk matrix on $A$ in terms of the variable $t$. Then \[\Sigma^*U\ps{t,A}\Sigma=\mathscr{L}\inv_{t\xrightarrow[]{}s}\ps{\ps{sI+i\mc{R}\ps{is,\Sigma,A}}\inv}\] and \[\mc{R}\ps{\lambda,\Sigma,A}=\lambda I-i\ps{\mathscr{L}_{t\rightarrow -i\lambda}\ps{\Sigma^*U(t,A)\Sigma}}^{-1}.\] 
\end{theorem}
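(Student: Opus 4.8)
The plan is to reduce both identities to the defining formula of the generalized reduction together with the single Laplace-transform fact $\mathscr{L}_{t\ar s}e^{Bt}=\ps{sI-B}\inv$ applied to $B=-iA$. Since the two displayed equations are algebraically equivalent---each is obtained from the other by solving for the reduction and then applying $\mathscr{L}_{t\ar s}$ or its inverse---it suffices to establish one of them. I would work first with the Laplace transform of the restricted walk matrix and then invert the resulting relation to recover the second identity.

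First I would compute $\mathscr{L}_{t\ar s}\ps{\Sigma^*U\ps{t,A}\Sigma}$. Because the Laplace transform acts entrywise and hence commutes with the constant left- and right-multiplications by $\Sigma^*$ and $\Sigma$ (as noted in the preliminaries), this equals $\Sigma^*\mathscr{L}_{t\ar s}\ps{e^{-itA}}\Sigma$. Writing $e^{-itA}=e^{\ps{-iA}t}$ and applying the Laplace fact with $B=-iA$ gives $\mathscr{L}_{t\ar s}\ps{e^{-itA}}=\ps{sI+iA}\inv$, so the transform equals $\Sigma^*\ps{sI+iA}\inv\Sigma$.

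Next I would convert this into the reduction through the substitution $\lambda=is$ (equivalently $s=-i\lambda$). The key algebraic identity is $sI+iA=-i\ps{isI-A}$, whence $\ps{sI+iA}\inv=i\ps{isI-A}\inv$ and thus $\Sigma^*\ps{sI+iA}\inv\Sigma=i\,\Sigma^*\ps{\li-A}\inv\Sigma$ with $\lambda=is$. By Definition \ref{Generalized reduction definition}, $\Sigma^*\ps{\li-A}\inv\Sigma=\ps{\li-\mc{R}\ps{\lambda,\Sigma,A}}\inv$, so the transform equals $i\ps{isI-\mc{R}\ps{is,\Sigma,A}}\inv$. A final rearrangement using the same identity $sI+i\mc{R}\ps{is,\Sigma,A}=-i\ps{isI-\mc{R}\ps{is,\Sigma,A}}$ rewrites this as $\ps{sI+i\mc{R}\ps{is,\Sigma,A}}\inv$; applying $\mathscr{L}\inv_{t\ar s}$ yields the first displayed identity. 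Solving the same chain of equalities for $\mc{R}\ps{\lambda,\Sigma,A}$ (now setting $s=-i\lambda$ so that $is=\lambda$, inverting, and multiplying through by $i$, which turns $\tfrac1i\ps{\li-\mc{R}}$ into $\li-\mc{R}$) produces the second.

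I do not expect a serious obstacle: the argument is essentially bookkeeping of the factors of $i$ together with one substitution. The only point requiring care is the analytic status of the Laplace transform. Since $A$ has real spectrum in the cases of interest, the entries of $U\ps{t,A}$ are bounded oscillatory functions of real $t$, so $\mathscr{L}_{t\ar s}\ps{\Sigma^*U\ps{t,A}\Sigma}$ converges only for $\Re\ps{s}>0$; the identification with $\ps{sI+iA}\inv$ should first be read on that half-plane and then extended by analytic continuation. Likewise the matrix $\Sigma^*\ps{\li-A}\inv\Sigma$ is invertible for all but finitely many $\lambda$, so the reduction is well defined as a matrix of rational (meromorphic) functions. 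With these conventions in place the substitution $\lambda=is$ is legitimate, and both displayed equations hold as equalities of meromorphic matrix-valued functions.
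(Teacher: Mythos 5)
Your proposal is correct and follows essentially the same route as the paper: both reduce the claim to the Laplace identity $\mathscr{L}_{t\ar s}e^{-itA}=\ps{sI+iA}\inv$, commute $\Sigma^*,\Sigma$ through the (inverse) transform, substitute $\lambda=is$, and invoke the defining formula $\Sigma^*\ps{\li-A}\inv\Sigma=\ps{\li-\mc{R}\ps{\lambda,\Sigma,A}}\inv$, with the second identity obtained by solving the first. Your added remarks on the half-plane of convergence and analytic continuation are a harmless (indeed slightly more careful) elaboration of a point the paper leaves implicit.
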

\begin{proof}
%The equivalence could have been established earlier from the uniqueness of the Taylor series, since each of these is equivalent to a power series with no coefficient-zero terms. The specific formulas given above follow from the following reasoning:
We compute as follows:
\begin{align*}
\Sigma^*U\ps{t,A}\Sigma=&\Sigma^*e^{-itA}\Sigma\\
=&\Sigma^*\ps{\mathscr{L}\inv_{t\xrightarrow[]{}s}\ps{sI+iA}\inv}\Sigma\\
%=&\mathscr{L}\inv_{t\xrightarrow[]{}s}\Sigma^*\ps{\ps{-iisI+iA}\inv}\Sigma\\
%=&\mathscr{L}\inv_{t\xrightarrow[]{}s}\Sigma^*\ps{\ps{-i}\inv\ps{isI-A}\inv}\Sigma\\
=&\mathscr{L}\inv_{t\xrightarrow[]{}s}\Sigma^*\ps{i\ps{isI-A}\inv}\Sigma\\
=&i\mathscr{L}\inv_{t\xrightarrow[]{}-i\lambda}\Sigma^*\ps{\li-A}\inv\Sigma\\
%=&i\mathscr{L}\inv_{t\xrightarrow[]{}-i\lambda}\ps{\ps{\Sigma^*\ps{\li-A}\inv\Sigma}\inv}\inv\\
%=&i\mathscr{L}\inv_{t\xrightarrow[]{}-i\lambda}\ps{\ps{\Sigma^*\ps{\li-A}\inv\Sigma}\inv}\inv\\
=&i\mathscr{L}\inv_{t\xrightarrow[]{}-i\lambda}\ps{\li-\ps{\li-\ps{\Sigma^*\ps{\li-A}\inv\Sigma}\inv}}\inv\\
=&i\mathscr{L}\inv_{t\xrightarrow[]{}-i\lambda}\ps{\li-\mc{R}\ps{\lambda,\Sigma,A}}\inv\\
=&\mathscr{L}\inv_{t\xrightarrow[]{}-i\lambda}\ps{-i\li+i\mc{R}\ps{\lambda,\Sigma,A}}\inv\\
=&\mathscr{L}\inv_{t\xrightarrow[]{}s}\ps{Is+i\mc{R}\ps{is,\Sigma,A}}\inv.%\\
%=&\Sigma^*\ps{\mathscr{L}\inv_{t\xrightarrow[]{}s}\ps{Is+iA}\inv}\Sigma
\end{align*}
%The last line has been copied here from the second step to emphasize that the Laplace transform piece of the calculation of the quantum walk matrix is left unchanged%, just as the walk-generating piece of $\ps{\li-\mc{R}\ps{\lambda,\Sigma,A}}\inv=\Sigma^*\ps{\li-A}\inv\Sigma$ was. We expect analogous formulas to exist for most important power series which can be defined in terms of powers of $A$
%. Restriction after the conversion is equivalent to reduction beforehand, just as with the walk-generating functions previously. We expect analogous formulas to exist for most important power series which can be defined in terms of powers of $A$.

The second formula is a direct corollary to the first. %, since each of the operations in the first formula has a unique and well-defined inverse operation (provided that certain reasonable conditions are met, as will always be the case for the reductions and quantum walk matrices of finite graphs).
\end{proof}

Note that this is the second theorem of this form. Earlier we saw that the walk-generating function for a graph (as restricted to a subset $S$ of the graph's vertex set) can be obtained by the similar-looking formula $\ps{\sum_{i=0}^\infty A^it^i}_{S\times S}=\ps{I-t\mc{R}\ps{\frac{1}{t},S,A}}\inv$. In fact, the restriction of any power series in $A$ to a subset can be obtained from the isospectral reduction of $A$ to that subset by passing to the walk-generating function and taking its Maclaurin expansion to get each term $\ps{A^i}_{S\times S}$ as $i$ ranges through the natural numbers. If the power series has no zero coefficients then a reverse formula also exists. %In particular, for regular graphs the isospectral reduction carries identical information regarding powers of the Laplacian.

\begin{corollary}
If $G$ and $H$ are two graphs, and $S\subset V(G)$ and $T\subset V(H)$ are such that $\mc{R}(\lambda,S,A_G) = \mc{R}(\lambda, T,A_H)$, then there is perfect state transfer between two vertices of $S$ in $G$ if and only if there is perfect state transfer between the two corresponding vertices of $T$ in $H$.
\end{corollary}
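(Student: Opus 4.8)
The plan is to deduce this corollary directly from Theorem \ref{Quantum walk equivalence to reduction}, treating perfect state transfer as a property of the restricted quantum walk matrix. The key observation is that PST from $u$ to $v$ is the statement that $U(\tau,A)$ is block diagonal with the $\{u,v\}$-block equal to $\bbm{0&\gamma\\\gamma&0}$; as noted in Section \ref{sec:quantum_walk}, this is precisely fractional revival on $S'=\{u,v\}$ with respect to $\bbm{0&\gamma\\\gamma&0}$. Since $U(\tau,A)$ is unitary, having the $\{u,v\}$-block equal to a unitary matrix forces the off-diagonal coupling between $\{u,v\}$ and the rest of the graph to vanish, so PST on $\{u,v\}$ is equivalent to the single condition $\Sigma^*U(\tau,A)\Sigma = \bbm{0&\gamma\\\gamma&0}$, where $\Sigma=\bbm{I\\0}$ picks out the coordinates indexed by $\{u,v\}$. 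Thus PST between two vertices of $S$ is entirely a statement about the restricted quantum walk matrix on the two-element subset, not about the full graph.

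First I would reduce to a two-element subset. Given vertices $u,v\in S$, let $S'=\{u,v\}\subseteq S$ in $G$ and let $T'=\{u',v'\}\subseteq T$ be the corresponding two vertices in $H$. By Theorem \ref{Reduction commutativity} (commutativity of reductions), the hypothesis $\mc{R}(\lambda,S,A_G)=\mc{R}(\lambda,T,A_H)$ gives, after reducing both sides further to the matching two-element subset,
\[
\mc{R}(\lambda,S',A_G)=\mc{R}(\lambda,S',\mc{R}(\lambda,S,A_G))=\mc{R}(\lambda,T',\mc{R}(\lambda,T,A_H))=\mc{R}(\lambda,T',A_H).
\]
Here I must be slightly careful that the identification of $S'$ with $T'$ respects the common indexing of the shared reduction, so that ``the two corresponding vertices'' are genuinely the same two coordinates of the common $|S|\times|S|$ matrix; this is exactly the meaning of ``corresponding vertices'' in the statement.

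Next I would apply the equivalence theorem. For the standard reduction we have $\Sigma=\bbm{I\\0}$ and $\mc{R}(\lambda,S',A)=\mc{R}(\lambda,\Sigma,A)$, so Theorem \ref{Quantum walk equivalence to reduction} yields
\[
\Sigma^*U(t,A_G)\Sigma=\mathscr{L}^{-1}_{t\ar s}\ps{\ps{sI+i\mc{R}(is,S',A_G)}\inv}.
\]
Because the right-hand side depends on $A_G$ only through $\mc{R}(\lambda,S',A_G)$, and the analogous formula holds for $A_H$ through $\mc{R}(\lambda,T',A_H)$, the equality $\mc{R}(\lambda,S',A_G)=\mc{R}(\lambda,T',A_H)$ forces
\[
\Sigma^*U(t,A_G)\Sigma=\Sigma^*U(t,A_H)\Sigma\quad\text{for all }t.
\]
So the restricted quantum walk matrices on the two distinguished vertices agree for all time in the two graphs.

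Finally I would conclude. By the first paragraph, PST from $u$ to $v$ in $G$ at time $\tau$ holds if and only if $\Sigma^*U(\tau,A_G)\Sigma=\bbm{0&\gamma\\\gamma&0}$ for some unit $\gamma$, and likewise for $H$. Since these restricted matrices are equal for every $t$, one condition holds at some $\tau$ exactly when the other does, giving the biconditional. The main obstacle I anticipate is justifying the first-paragraph reduction, namely that PST is equivalent to a condition purely on $\Sigma^*U(\tau,A)\Sigma$ rather than on the whole matrix $U(\tau,A)$; this uses unitarity of $U(\tau,A)$ to argue that a unitary $2\times 2$ block already saturates the rows and columns it occupies, forcing the cross terms to zero, so that no information outside $S'$ is needed. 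Everything else is a direct invocation of Theorems \ref{Reduction commutativity} and \ref{Quantum walk equivalence to reduction}.
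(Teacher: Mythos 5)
Your proposal is correct and follows essentially the same route as the paper, which states this corollary without a separate proof precisely because it is meant to be read off directly from Theorem \ref{Quantum walk equivalence to reduction}: the restricted quantum walk matrix is determined by the reduction, and PST is (by unitarity) a condition on the $2\times 2$ restricted block alone. Your added care in passing to the two-vertex subset via Theorem \ref{Reduction commutativity} and in justifying that a unit off-diagonal entry in a unitary matrix forces the rest of its row and column to vanish supplies exactly the details the paper leaves implicit.
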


We could state similar corollaries for any other local quantum state transfer phenomenon, such as pretty good state transfer, fraction revival, pretty good fractional revival, etc.

\section{Examples}\label{sec:examples}

\subsection{Examples from Hypercubes}

Consider the hypercube graph $Q_n$. Because $Q_n$ is the Cartesian product of $n$ $2$-paths, and $2$-paths exhibit perfect state transfer between their endpoints at time $t=\frac\pi2$, $Q_n$ also exhibits perfect state transfer between any pair of antipodal vertices at $t=\frac\pi2$ (see \cite{godsil_survey} for more information). Choose a pair $a,b$ of antipodal vertices. The distance partition gives an equitable partition of $Q_n$ which places $a,b$ into singlet classes. 
This is associated to a matrix $P$ with orthonormal columns such that $AP=Pd$ for some matrix $d$, (where $A$ is the adjacency matrix of $Q_n$), and such that the columns corresponding to the $\cs{a},\cs{b}$ classes of the distance partition are the standard basis vectors $e_a,e_b$ corresponding to $a,b$. 
Let $A'$ be any equitably partitionable matrix with the same divisor matrix as $A\ps{Q_n}$, $A'P'=P'd$. By Theorems \ref{Reduction commutativity} and \ref{thm:equit}, 
\begin{align*}
\mc{R}\ps{\lambda,\cs{a,b},A}=&\mc{R}\ps{\lambda,\bbm{e_a&e_b},A}\\=&\mc{R}\ps{\lambda,P\bbm{e_{\cs{a}}&e_{\cs{b}}},A}\\=&\mc{R}\ps{\lambda,\bbm{e_{\cs{a}}&e_{\cs{b}}},\mc{R}\ps{\lambda,P,A\ps{Q_n}}}\\=&\mc{R}\ps{\lambda,\bbm{e_{\cs{a}}&e_{\cs{b}}},d}\\=&\mc{R}\ps{\lambda,\bbm{e_{\cs{a}}&e_{\cs{b}}},\mc{R}\ps{\lambda,P',A'}}\\=&\mc{R}\ps{\lambda,P'\bbm{e_{\cs{a}}&e_{\cs{b}}},A'}\\=&\mc{R}\ps{\lambda,\bbm{e_a&e_b},A'}\quad a,b\text{ are singlets of the partition}\\=&\mc{R}\ps{\lambda,\cs{a,b},A'}
\end{align*}
and so by Theorem \ref{Quantum walk equivalence to reduction}, $\bbm{U\ps{t,A}}_{\cs{a,b}\times\cs{a,b}}=\bbm{U\ps{t,A'}}_{\cs{a,b}\times\cs{a,b}}$ and so $A'$ also exhibits perfect state transfer at time $t$ between vertices $a$ and $b$. For instance, the equitably partitioned matrix
% \begin{align*}
% \bbm{\bbm{0}&\bbm{1&1&1&1}&\bbm{0&0&0&0&0&0}&\bbm{0&0&0&0}&\bbm{0}\\\bbm{1\\1\\1\\1}&\bbm{0&0&0&0\\0&0&0&0\\0&0&0&0\\0&0&0&0}&\bbm{1&1&1&0&0&0\\0&0&0&1&1&1\\1&1&0&1&0&0\\0&0&1&0&1&1}&\bbm{0&0&0&0\\0&0&0&0\\0&0&0&0\\0&0&0&0}&\bbm{0\\0\\0\\0}\\\bbm{0\\0\\0\\0\\0\\0}&\bbm{1&0&1&0\\1&0&1&0\\1&0&0&1\\0&1&1&0\\0&1&0&1\\0&1&0&1}&\bbm{0&0&0&0&0&0\\0&0&0&0&0&0\\0&0&0&0&0&0\\0&0&0&0&0&0\\0&0&0&0&0&0\\0&0&0&0&0&0}&\bbm{1&0&1&0\\1&0&1&0\\1&0&0&1\\0&1&1&0\\0&1&0&1\\0&1&0&1}&\bbm{0\\0\\0\\0\\0\\0}\\\bbm{0\\0\\0\\0}&\bbm{0&0&0&0\\0&0&0&0\\0&0&0&0\\0&0&0&0}&\bbm{1&1&1&0&0&0\\0&0&0&1&1&1\\1&1&0&1&0&0\\0&0&1&0&1&1}&\bbm{0&0&0&0\\0&0&0&0\\0&0&0&0\\0&0&0&0}&\bbm{1\\1\\1\\1}\\\bbm{0}&\bbm{0&0&0&0}&\bbm{0&0&0&0&0&0}&\bbm{1&1&1&1}&\bbm{0}}
% \end{align*}
\[\left[\begin{array}{c|cccc|cccccc|cccc|c}
    0 & 1&1&1&1&0&0&0&0&0&0&0&0&0&0&0 \\
    \hline
    1 &0&0&0&0&1&1&1&0&0&0&0&0&0&0&0 \\
    1 &0&0&0&0&0&0&0&1&1&1&0&0&0&0&0 \\
    1 &0&0&0&0&1&1&0&1&0&0&0&0&0&0&0 \\
    1 &0&0&0&0&0&0&1&0&1&1&0&0&0&0&0 \\
    \hline
    0 &1&0&1&0 &0&0&0&0&0&0 &1&0&1&0 &0\\
    0 &1&0&1&0 &0&0&0&0&0&0 &1&0&1&0 &0\\
    0 &1&0&0&1 &0&0&0&0&0&0 &1&0&0&1 &0\\
    0 &0&1&1&0 &0&0&0&0&0&0 &0&1&1&0 &0\\
    0 &0&1&0&1 &0&0&0&0&0&0 &0&1&0&1 &0\\
    0 &0&1&0&1 &0&0&0&0&0&0 &0&1&0&1 &0\\
    \hline
    0 &0&0&0&0&1&1&1&0&0&0&0&0&0&0&1 \\
    0 &0&0&0&0&0&0&0&1&1&1&0&0&0&0&1 \\
    0 &0&0&0&0&1&1&0&1&0&0&0&0&0&0&1 \\
    0 &0&0&0&0&0&0&1&0&1&1&0&0&0&0&1 \\
    \hline
    0 &0&0&0&0 &0&0&0&0&0&0 &1&1&1&1 &0
    
\end{array}\right]\]
describes a $4$-regular (loopless, symmetric, unweighted) graph with an equitable distance partition, the same divisor matrix as the $4$-hypercube, and hence the same quantum state transfer as the hypercube occurring at the same time $t$. The same holds for any graph of this form where the $4\times6$ sub-matrices have three ones per column and two per row. This gives rise to at least four non-isomorphic graphs.

\subsection{Weighted Examples}

Let $p\ps{x}=\frac{x^{15}+3x^{11}+3x^7+x^3}{8}$ and define $U'\ps{t}=\bbm{p\ps{\cos{t}}&p\ps{i\sin\ps{t}}\\p\ps{i\sin\ps{t}}&p\ps{\cos\ps{t}}}$. It may be observed that $U'\ps{t}$ satisfies several conditions required of a submatrix of a quantum walk matrix (such as having rows and columns with norm at most $1$ for all $t\in\R$), and that when treated as such it exhibits quantum state transfer between the first and second rows/columns at time $t=\frac{\pi}{2}$. Using the quantum walk/reduction equivalence (Theorem \ref{Quantum walk equivalence to reduction}) and unfolding formulas above we may numerically approximate a matrix $A$ such that the restriction of $\bs{e^{-itA}}$ to its first two rows and columns equals $U\ps{t}$ for all $t$.\footnote{Code to accomplish this procedure can be found at \href{https://github.com/JakkobMath/Isospectral-reduction-code}{https://github.com/JakkobMath/Isospectral-reduction-code}.} For this particular example, we get \newline\noindent\resizebox{\textwidth}{!}{$\displaystyle A=\left(
\begin{array}{cccccccccccccccc}
 0. & 0. & 0.027978 & -0.0922718 & 0.242288 & -0.486778 & 0.823341 & -1.16907 & 1.46643 & -1.46643 & 1.16907 & -0.823341 & 0.486778 & -0.242288 & 0.0922718 & 0.027978 \\
 0. & 0. & 0.027978 & 0.0922718 & 0.242288 & 0.486778 & 0.823341 & 1.16907 & 1.46643 & 1.46643 & 1.16907 & 0.823341 & 0.486778 & 0.242288 & 0.0922718 & -0.027978 \\
 0.027978 & 0.027978 & -14.9999 & 0. & 0. & 0. & 0. & 0. & 0. & 0. & 0. & 0. & 0. & 0. & 0. & 0. \\
 -0.0922718 & 0.0922718 & 0. & -12.9986 & 0. & 0. & 0. & 0. & 0. & 0. & 0. & 0. & 0. & 0. & 0. & 0. \\
 0.242288 & 0.242288 & 0. & 0. & -10.9883 & 0. & 0. & 0. & 0. & 0. & 0. & 0. & 0. & 0. & 0. & 0. \\
 -0.486778 & 0.486778 & 0. & 0. & 0. & -8.94012 & 0. & 0. & 0. & 0. & 0. & 0. & 0. & 0. & 0. & 0. \\
 0.823341 & 0.823341 & 0. & 0. & 0. & 0. & -6.77289 & 0. & 0. & 0. & 0. & 0. & 0. & 0. & 0. & 0. \\
 -1.16907 & 1.16907 & 0. & 0. & 0. & 0. & 0. & -4.37061 & 0. & 0. & 0. & 0. & 0. & 0. & 0. & 0. \\
 1.46643 & 1.46643 & 0. & 0. & 0. & 0. & 0. & 0. & -1.54827 & 0. & 0. & 0. & 0. & 0. & 0. & 0. \\
 -1.46643 & 1.46643 & 0. & 0. & 0. & 0. & 0. & 0. & 0. & 1.54827 & 0. & 0. & 0. & 0. & 0. & 0. \\
 1.16907 & 1.16907 & 0. & 0. & 0. & 0. & 0. & 0. & 0. & 0. & 4.37061 & 0. & 0. & 0. & 0. & 0. \\
 -0.823341 & 0.823341 & 0. & 0. & 0. & 0. & 0. & 0. & 0. & 0. & 0. & 6.77289 & 0. & 0. & 0. & 0. \\
 0.486778 & 0.486778 & 0. & 0. & 0. & 0. & 0. & 0. & 0. & 0. & 0. & 0. & 8.94012 & 0. & 0. & 0. \\
 -0.242288 & 0.242288 & 0. & 0. & 0. & 0. & 0. & 0. & 0. & 0. & 0. & 0. & 0. & 10.9883 & 0. & 0. \\
 0.0922718 & 0.0922718 & 0. & 0. & 0. & 0. & 0. & 0. & 0. & 0. & 0. & 0. & 0. & 0. & 12.9986 & 0. \\
 0.027978 & -0.027978 & 0. & 0. & 0. & 0. & 0. & 0. & 0. & 0. & 0. & 0. & 0. & 0. & 0. & 14.9999 \\
\end{array}
\right).$}

Now we apply the correctness-preserving similarities to clean up this matrix. Call the submatrix of $A$ consisting of its first two columns and all but the last two rows $C$ and let $C$ have a singular-value decomposition as $C=U\Sigma V^*$. Conjugating $A$ by $\bbm{I_2&0\\0&V}$ gives us the matrix
\newline\noindent\resizebox{\textwidth}{!}{$\displaystyle A'=\left(
\begin{array}{cccccccccccccccc}
 0. & 0. & 2.12132 & 2.12132 & 0. & 0. & 0. & 0. & 0. & 0. & 0. & 0. & 0. & 0. & 0. & 0. \\
 0. & 0. & 2.12132 & -2.12132 & 0. & 0. & 0. & 0. & 0. & 0. & 0. & 0. & 0. & 0. & 0. & 0. \\
 2.12132 & 2.12132 & -0.0833333 & 0. & 0.196751 & 0.284463 & -2.57647 & 0.68318 & -0.977199 & 0.856952 & 2.48288 & 0.481143 & 2.08239 & 0.141588 & 0.571262 & -0.0163498 \\
 2.12132 & -2.12132 & 0. & 0.0833333 & 0. & 2.05577 & 0.219783 & 2.41895 & 0.39145 & -1.05739 & 0.312072 & -2.6215 & 0.129941 & -1.2529 & 0.024631 & 0.197587 \\
 0. & 0. & 0.196751 & 0. & -14.9973 & 0.0037521 & -0.033984 & 0.0090112 & -0.0128894 & 0.0113033 & 0.0327495 & 0.0063463 & 0.0274669 & 0.0018676 & 0.007535 & -0.0002157 \\
 0. & 0. & 0.284463 & 2.05577 & 0.0037521 & -8.5725 & -0.0047034 & 0.307882 & 0.0858303 & -0.548067 & 0.15351 & -0.770738 & 0.0912691 & -0.336743 & 0.0219053 & 0.0509662 \\
 0. & 0. & -2.57647 & 0.219783 & -0.033984 & -0.0047034 & -6.7053 & -0.0536042 & -1.27472 & -0.13598 & -2.27623 & -0.110415 & -1.35281 & -0.0405812 & -0.324624 & 0.005575 \\
 0. & 0. & 0.68318 & 2.41895 & 0.0090112 & 0.307882 & -0.0536042 & -5.01216 & 0.13078 & -3.04851 & 0.308603 & -2.82363 & 0.194183 & -1.09495 & 0.0478674 & 0.155452 \\
 0. & 0. & -0.977199 & 0.39145 & -0.0128894 & 0.0858303 & -1.27472 & 0.13078 & -6.30342 & 0.0416119 & -6.03507 & -0.0373148 & -3.23428 & -0.0253876 & -0.734529 & 0.0045148 \\
 0. & 0. & 0.856952 & -1.05739 & 0.0113033 & -0.548067 & -0.13598 & -3.04851 & 0.0416119 & -5.09013 & 0.289492 & -5.12205 & 0.202933 & -1.83847 & 0.052339 & 0.24869 \\
 0. & 0. & 2.48288 & 0.312072 & 0.0327495 & 0.15351 & -2.27623 & 0.308603 & -6.03507 & 0.289492 & -2.22977 & 0.114164 & -3.32338 & 0.02211 & -0.726791 & -0.001291 \\
 0. & 0. & 0.481143 & -2.6215 & 0.0063463 & -0.770738 & -0.110415 & -2.82363 & -0.0373148 & -5.12205 & 0.114164 & 3.11392 & 0.0937966 & -1.26269 & 0.0255681 & 0.166242 \\
 0. & 0. & 2.08239 & 0.129941 & 0.0274669 & 0.0912691 & -1.35281 & 0.194183 & -3.23428 & 0.202933 & -3.32338 & 0.0937966 & 7.31687 & 0.0228196 & -0.348013 & -0.0021095 \\
 0. & 0. & 0.141588 & -1.2529 & 0.0018676 & -0.336743 & -0.0405812 & -1.09495 & -0.0253876 & -1.83847 & 0.02211 & -1.26269 & 0.0228196 & 10.562 & 0.0066175 & 0.0552393 \\
 0. & 0. & 0.571262 & 0.024631 & 0.007535 & 0.0219053 & -0.324624 & 0.0478674 & -0.734529 & 0.052339 & -0.726791 & 0.0255681 & -0.348013 & 0.0066175 & 12.925 & -0.0006645 \\
 0. & 0. & -0.0163498 & 0.197587 & -0.0002157 & 0.0509662 & 0.005575 & 0.155452 & 0.0045148 & 0.24869 & -0.001291 & 0.166242 & -0.0021095 & 0.0552393 & -0.0006645 & 14.9928 \\
\end{array}
\right)$.}

Because this is a correctness-preserving similarity, $A'$ has the same reduction (to its first two rows and columns) as $A$, and hence the same restricted quantum walk matrix as $A$. Now let $C'$ be the submatrix of $A'$ consisting of the third and fourth rows and the fifth through final columns, and let $V'$ be the right unitary matrix in the singular value decomposition for $C'$. Conjugating $A'$ by $\bbm{I_4&0\\0&V'}$ gets us a new matrix $A''$ with the same restricted quantum walk matrix as $A$. By iterating this process (and applying some similarities by block-diagonal matrices with two-by-two unitary matrices such as $\bbm{1&0\\0&1}$ and $\bbm{\frac{\sqrt{2}}{2}&\frac{\sqrt{2}}{2}\\\frac{\sqrt{2}}{2}&-\frac{\sqrt{2}}{2}}$ on the diagonal) we may find the block-tridiagonal matrix 
\newline\noindent\resizebox{\textwidth}{!}{$\displaystyle A''=\left(
\begin{array}{cccccccccccccccc}
 0. & 0. & -3. & 0. & 0. & 0. & 0. & 0. & 0. & 0. & 0. & 0. & 0. & 0. & 0. & 0. \\
 0. & 0. & 0. & -3. & 0. & 0. & 0. & 0. & 0. & 0. & 0. & 0. & 0. & 0. & 0. & 0. \\
 -3. & 0. & 0. & -0.0833333 & -4.47136 & 0. & 0. & 0. & 0. & 0. & 0. & 0. & 0. & 0. & 0. & 0. \\
 0. & -3. & -0.0833333 & 0. & 0. & -4.47136 & 0. & 0. & 0. & 0. & 0. & 0. & 0. & 0. & 0. & 0. \\
 0. & 0. & -4.47136 & 0. & 0. & 0.200041 & -5.56723 & 0. & 0. & 0. & 0. & 0. & 0. & 0. & 0. & 0. \\
 0. & 0. & 0. & -4.47136 & 0.200041 & 0. & 0. & 5.56723 & 0. & 0. & 0. & 0. & 0. & 0. & 0. & 0. \\
 0. & 0. & 0. & 0. & -5.56723 & 0. & 0. & 0.619732 & 6.40559 & 0. & 0. & 0. & 0. & 0. & 0. & 0. \\
 0. & 0. & 0. & 0. & 0. & 5.56723 & 0.619732 & 0. & 0. & 6.40559 & 0. & 0. & 0. & 0. & 0. & 0. \\
 0. & 0. & 0. & 0. & 0. & 0. & 6.40559 & 0. & 0. & -1.31413 & 7.07649 & 0. & 0. & 0. & 0. & 0. \\
 0. & 0. & 0. & 0. & 0. & 0. & 0. & 6.40559 & -1.31413 & 0. & 0. & 7.07649 & 0. & 0. & 0. & 0. \\
 0. & 0. & 0. & 0. & 0. & 0. & 0. & 0. & 7.07649 & 0. & 0. & 3.17371 & -7.21365 & 0. & 0. & 0. \\
 0. & 0. & 0. & 0. & 0. & 0. & 0. & 0. & 0. & 7.07649 & 3.17371 & 0. & 0. & -7.21365 & 0. & 0. \\
 0. & 0. & 0. & 0. & 0. & 0. & 0. & 0. & 0. & 0. & -7.21365 & 0. & 0. & -5.45303 & -7.37469 & 0. \\
 0. & 0. & 0. & 0. & 0. & 0. & 0. & 0. & 0. & 0. & 0. & -7.21365 & -5.45303 & 0. & 0. & 7.37469 \\
 0. & 0. & 0. & 0. & 0. & 0. & 0. & 0. & 0. & 0. & 0. & 0. & -7.37469 & 0. & 0. & -11.0904 \\
 0. & 0. & 0. & 0. & 0. & 0. & 0. & 0. & 0. & 0. & 0. & 0. & 0. & 7.37469 & -11.0904 & 0. \\
\end{array}
\right).$}

By construction, $A''$ has the same reduction to its first two rows and columns as $A$. Finally, we conjugate $A''$ by the diagonal matrix with diagonal entries $$1, 1, -1, -1, 1, 1, -1, 1, -1, 1, -1, 1, 1, -1, -1, -1$$ to obtain the matrix \newline\noindent\resizebox{\textwidth}{!}{$\displaystyle A'''=\left(
\begin{array}{cccccccccccccccc}
 0. & 0. & 3. & 0. & 0. & 0. & 0. & 0. & 0. & 0. & 0. & 0. & 0. & 0. & 0. & 0. \\
 0. & 0. & 0. & 3. & 0. & 0. & 0. & 0. & 0. & 0. & 0. & 0. & 0. & 0. & 0. & 0. \\
 3. & 0. & 0. & -0.0833333 & 4.47136 & 0. & 0. & 0. & 0. & 0. & 0. & 0. & 0. & 0. & 0. & 0. \\
 0. & 3. & -0.0833333 & 0. & 0. & 4.47136 & 0. & 0. & 0. & 0. & 0. & 0. & 0. & 0. & 0. & 0. \\
 0. & 0. & 4.47136 & 0. & 0. & 0.200041 & 5.56723 & 0. & 0. & 0. & 0. & 0. & 0. & 0. & 0. & 0. \\
 0. & 0. & 0. & 4.47136 & 0.200041 & 0. & 0. & 5.56723 & 0. & 0. & 0. & 0. & 0. & 0. & 0. & 0. \\
 0. & 0. & 0. & 0. & 5.56723 & 0. & 0. & -0.619732 & 6.40559 & 0. & 0. & 0. & 0. & 0. & 0. & 0. \\
 0. & 0. & 0. & 0. & 0. & 5.56723 & -0.619732 & 0. & 0. & 6.40559 & 0. & 0. & 0. & 0. & 0. & 0. \\
 0. & 0. & 0. & 0. & 0. & 0. & 6.40559 & 0. & 0. & 1.31413 & 7.07649 & 0. & 0. & 0. & 0. & 0. \\
 0. & 0. & 0. & 0. & 0. & 0. & 0. & 6.40559 & 1.31413 & 0. & 0. & 7.07649 & 0. & 0. & 0. & 0. \\
 0. & 0. & 0. & 0. & 0. & 0. & 0. & 0. & 7.07649 & 0. & 0. & -3.17371 & 7.21365 & 0. & 0. & 0. \\
 0. & 0. & 0. & 0. & 0. & 0. & 0. & 0. & 0. & 7.07649 & -3.17371 & 0. & 0. & 7.21365 & 0. & 0. \\
 0. & 0. & 0. & 0. & 0. & 0. & 0. & 0. & 0. & 0. & 7.21365 & 0. & 0. & 5.45303 & 7.37469 & 0. \\
 0. & 0. & 0. & 0. & 0. & 0. & 0. & 0. & 0. & 0. & 0. & 7.21365 & 5.45303 & 0. & 0. & 7.37469 \\
 0. & 0. & 0. & 0. & 0. & 0. & 0. & 0. & 0. & 0. & 0. & 0. & 7.37469 & 0. & 0. & -11.0904 \\
 0. & 0. & 0. & 0. & 0. & 0. & 0. & 0. & 0. & 0. & 0. & 0. & 0. & 7.37469 & -11.0904 & 0. \\
\end{array}
\right).$}

This matrix has mostly positive entries, has the same reduction to its first two vertices as $A$, has the $U'\ps{t}$ defined above as the restriction of its quantum walk matrix to its first two rows and columns, and has no loops. Its eigenvalues are the odd integers between $-15$ and $15$ inclusive, each with multiplicity one. It exhibits perfect state transfer between its first two vertices and is the matrix of minimal size having $U'\ps{t}$ as a restriction of its quantum walk matrix. Furthermore, any matrix having $U'\ps{t}$ as a submatrix of its quantum walk matrix may be obtained from $A'''$ by appending new rows and columns such that the existing eigenvalues are preserved and the restrictions of the eigenvectors of the new matrix to its first $16$ rows and columns match the eigenvectors of $A'''$, applying conjugations by invertible matrices that equal the identity matrix on their first two rows and columns, and applying conjugations by permutation matrices.

% \printbibheading

%\printbibliography
\bibliographystyle{plain}
\bibliography{Citations}

\end{document}